\numberwithin{equation}{section}
\newtheorem{theorem}{Theorem}[section]
\newtheorem{lemma}[theorem]{Lemma}
\newtheorem{cor}[theorem]{Corollary}
\newtheorem{prop}[theorem]{Proposition}
\newtheorem{hypothesis}[theorem]{Hypothesis}
\theoremstyle{definition}
\theoremstyle{remark}
\newtheorem{remark}[theorem]{Remark}
\numberwithin{equation}{section}
\title{Cohomogeneity-One Quasi-Einstein Metrics}
\author{Timothy Buttsworth}
\begin{document}
\maketitle
\begin{abstract}
Let $G/H$ be a connected, simply connected homogeneous space of a compact Lie group $G$. We study $G$-invariant quasi-Einstein metrics on the cohomogeneity one manifold 
$G/H\times (0,1)$ imposing the so-called monotypic condition on $G/H$. We obtain estimates on the rate of blow-up for these metrics near a singularity under a mild assumption on $G/H$. Next, we demonstrate that we can find quasi-Einstein metrics satisfying arbitrary $G$-invariant Dirichlet conditions.
\end{abstract}

\section{Introduction}
On a smooth manifold $M$, choose some non-negative real number $m$, 
a smooth function $u$ and a Riemannian metric $g$. 
The $m$--Bakry-Emery tensor $Ric^m_u(g)$ is defined by 
\begin{align*}
 Ric^m_u(g)=Ric(g)+Hess(u)-m\,du\otimes du,
\end{align*}
where $Ric(g)$ is the Ricci curvature, and $Hess(u)$ denotes the Hessian of the function $u$ with respect to $g$.
The $m$--Bakry-Emery tensor can be thought of as an extension of the Ricci curvature because the two notions are identical when the function $u$ is constant. 
When $m=0$, the $m$--Bakry-Emery tensor coincides with the usual Bakry-Emery tensor. The case that $m$ is strictly positive has been studied, for example, in
\cite{Case,Wei,KimKim,WeiWyley}. One setting in which this case arises is the study of the smooth metric measure space $(M,g,e^{-u}dV(g))$, 
where $dV(g)$ is the volume form of the Riemannian metric $g$; for example, see \cite{WeiWyley}. 
  
Motivated by the extensive theory of Einstein metrics, one would like to develop a theory for solutions of
\begin{align}\label{QEE}
 Ric^m_u(g)=\lambda g.
\end{align}
In particular, one would like to know under what circumstances solutions exist, and how they behave. 
A pair $(g,u)$ solving \eqref{QEE} is called a \textit{quasi-Einstein metric}. 
Clearly, a quasi-Einstein metric corresponds to an Einstein metric if $u$ is constant, but there is a relationship between 
the two concepts even if $u$ is non-constant. Indeed, as discussed in 
\cite{KimKim}, Einstein metrics on warped product spaces arise as solutions to the quasi-Einstein equation \eqref{QEE}. 
This observation was used by Case in \cite{Case} to demonstrate non-existence 
of Einstein metrics on warped product spaces under certain conditions. 
The relationship between Einstein and quasi-Einstein metrics also appears in \cite{Wei}, where the authors prove rigidity results
for \eqref{QEE}. We discuss this relationship further
in Section 2. 

In this paper, we consider solutions of \eqref{QEE} in the cohomogeneity one setting, obtaining an estimate on the blow-up rate and an existence theorem. 
Accordingly, we assume that $M$ is acted on by a group $G$ with principal orbits of codimension one, 
and we require that $g$ and $u$ be invariant under the action of $G$. The benefit of studying quasi-Einstein metrics in this setting is that \eqref{QEE} 
becomes a system of 
ODEs instead of PDEs. 
A similar setting is used by Hall in \cite{HallQE}, 
where $M$ appears as a one-parameter family of hypersurfaces. In this setting, Hall provides 
examples of quasi-Einstein metrics that respect this hypersurface structure, thus extending 
work on the Ricci soliton equations done by Dancer and Wang in \cite{DancerWangS}. Our focus is on the case that 
$M=G/H\times (0,1)$ where $G/H$ is a homogeneous space satisfying the monotypic condition, in which case \eqref{QEE} becomes a system of ODEs on the interval $(0,1)$. 
We study this system in two different contexts.

Firstly, we examine singular solutions for \eqref{QEE}, by which 
we mean solutions of the system of ODEs that exist on $(0,1)$, but cannot be smoothly extended to $[0,1)$. In terms of metrics, this means that a quasi-Einstein metric 
exists on $G/H\times (0,1)$, 
and cannot be extended smoothly to a metric on the manifold $G/H\times [0,1)$. 
Despite the name, there are singular solutions such that the completion of $G/H\times (0,1)$ is a smooth Riemannian manifold with respect to the corresponding Riemannian metric. 
These are solutions satisfying so-called `smoothness' conditions, and have been studied for cohomogeneity one Einstein metrics, 
Ricci solitons and quasi-Einstein metrics in \cite{EschenburgWang}, \cite{Buzano11} and \cite{Wink} respectively. Geometrically, 
these smoothness conditions correspond to Riemannian metrics on $G/H\times (0,1)$ such that the principal orbits $G/H\times \{t\}$ collapse smoothly as $t$ tends to $0$. 
In this paper, we estimate the blow-up rate of 
the singularities that can eventuate. In particular, we demonstrate under a mild assumption on the homogeneous space $G/H$, that the singularities can form no faster than in the situation
discussed in \cite{Wink}. Since Einstein metrics are examples of quasi-Einstein metrics, we also find that singularities of the Einstein equation 
form no faster than those in \cite{EschenburgWang}.

Next, we examine \eqref{QEE} subject to $G$-invariant Dirichlet conditions. In this case, we search for 
a quasi-Einstein metric $(g,u)$ on $G/H\times (0,1)$ which can be smoothly extended to $G/H\times [0,1]$ so that 
for $i=0,1$, $(g,u)$ coincides with $(\hat{g}_i,u(i))$ when restricted to 
$G/H\times \{i\}$, where $\hat{g}_i$ is a fixed $G$-invariant Riemannian metric on $G/H$, and $u(i)$ is a fixed real number. 
The Dirichlet problem for Einstein metrics has been 
studied in \cite{Anderson08,Buttsworth18}, but various other boundary-value problems for equations involving the Ricci curvature have also been 
studied by a number of authors; 
for example, see \cite{Anderson08,Buttsworth18,Shen,PulemotovRF,APC,APQLRF,Brendle2a,Brendle2b,PG,Cort,G16,CA12A}. 

 Despite the relationship between quasi-Einstein metrics and Einstein metrics on warped product spaces, 
 being able to solve the Dirichlet problem for Einstein metrics does not immediately imply the solvability of the Dirichlet problem for quasi-Einstein metrics; 
 see Section 2 for more details. In particular, the study of the Dirichlet problem for cohomogeneity one Einstein metrics in \cite{Buttsworth18} is 
 not immediately helpful 
 in the study of the Dirichlet problem for cohomogeneity one quasi-Einstein metrics. 
 Our result proves existence of a one-parameter family of solutions to the Dirichlet problem $(g,u)_{h}$, 
 where the parameter $h$ is small and coincides with the length of the interval $[0,1]$ with respect to the corresponding metric. 
 We also provide examples demonstrating that we may lose existence or uniqueness if $h$ is sufficiently large. 
\section{Preliminaries and Main Results}

\subsection{Cohomogeneity One Riemannian Metrics}
Let $G/H$ be a connected, simply connected homogeneous space of a compact Lie group $G$. 
Denote with $\mathfrak{g}$ and $\mathfrak{h}$ the Lie algebras of $G$ and $H$ respectively. We choose some Ad$(G)$-invariant inner product $Q$ on $\mathfrak{g}$. 
Letting $\mathfrak{m}$ be the $Q$-orthogonal complement 
of $\mathfrak{h}$ in $\mathfrak{g}$, we naturally identify $\mathfrak{m}$ with the tangent space of $G/H$ at $H$. Now take a $Q$-orthogonal 
decomposition  
\begin{equation}\label{DC}
\mathfrak{m}=\bigoplus_{i=1}^{n} \mathfrak{m}_i
\end{equation}
such that each $\mathfrak{m}_i$ in \eqref{DC} is an irreducible Ad$(H)$ module.
We make the following assumption on this decomposition. 
\begin{hypothesis}\label{IID}
 The submodule $\mathfrak{m}_{i}$ is non-isomorphic to $\mathfrak{m}_{j}$ if $i\neq j$. 
\end{hypothesis}
\noindent This assumption is referred to as the \textit{monotypic} condition, and it has appeared in, for example, \cite{DancerWang}, \cite{GroveZiller} and \cite{Graev}.

We search for solutions $(g,u)$ 
of \eqref{QEE} such that $u$ is a $G$-invariant function and $g$ is a $G$-invariant Riemannian metric on $G/H\times (0,1)$ having the form 
\begin{equation}\label{RMG}
g=h^2(t)\,dt\otimes dt+g_t,
\end{equation}
where $t$ is the natural parameter running through the interval $(0,1)$, 
$h(t)$ is a smooth positive function on $(0,1)$, and 
$g_t$ is a time-dependent $G$-invariant Riemannian metric on $G/H$. Hypothesis \ref{IID} implies the existence of an array of smooth functions 
$y=(y_1,\cdots,y_n)$ on $(0,1)$ such that 
\begin{align*}
g_t(X,Y)&=\sum_{i=1}^{n} e^{2y_i(t)} Q(pr_{\mathfrak{m}_i} X,pr_{\mathfrak{m}_i}Y)
\end{align*}
for all $X,Y\in \mathfrak{m}$, where $pr_{\mathfrak{m}_i} X$ denotes the $Q$-orthogonal projection of $X$ in $\mathfrak{m}_i$. 
The quasi-Einstein equation \eqref{QEE} is diffeomorphism-invariant, so by re-parametrising, we can assume that the 
function $h$ is constant. In this case, a tedious but straightforward computation of $Ric(g)$ (cf. Proposition 1.14 of \cite{GroveZiller} and Lemma 3.1 of \cite{APC}) and $Hess(u)$  demonstrates that the pair $(g,u)$ satisfies \eqref{QEE} if and only if 
\begin{align}\label{QEECO}
\begin{split}
 -\sum_{k=1}^{n}d_k(y_k''+y_k'^2)+u''-m(u')^2&=h^2\lambda, \\
 h^2r_i(y)-y_i'\sum_{k=1}^{n}d_ky_k'+u'y_i'-y_i''&=h^2\lambda, \qquad \ i=1,\cdots,n,
\end{split}
 \end{align}
 where $r_i(y)$ is given by 
 \begin{align}\label{DHR}
  r_i(y)=\frac{\beta_i}{2e^{2y_i}}+\sum_{k,l=1}^{n}\gamma^{l}_{ik}\frac{e^{4y_i}-2e^{4y_k}}{4 e^{2y_i+2y_k+2y_l}}.
 \end{align}
 Here, $d_i$ is the dimension of $\mathfrak{m}_i$, and $\beta_i$ and $\gamma_{ik}^{l}$ are non-negative numbers determined by the homogeneous space $G/H$; 
 see Section 3 of \cite{APC} for the precise definition of these numbers, but note they appeared earlier in, for example, \cite{Wang}, with different notation.  

 Let $r$ be the continuous vector function with components $r_i$, set 
 $$\xi=\sum_{i=1}^{n}d_i y_i'-u',$$ and identify $y$ with the diagonal matrix 
 \begin{align*}
diag(\underbrace{y_1,\cdots,y_1}_{d_1~\mathrm{times}},\cdots,\underbrace{y_n,\cdots,y_n}_{d_n~\mathrm{times}})
\end{align*} 
of dimension $d=\sum_{i=1}^{n}d_i$. Then we can simplify \eqref{QEECO} further as 
 \begin{align}\label{QEECOS}
 \begin{split}
 \xi'&=-tr(y'^2)-m(tr(y')-\xi)^2-h^2\lambda, \\
 y''&=-\xi y'+h^2r(y)-h^2\lambda I, 
\end{split}
 \end{align}
where $I$ is the identity matrix.
 \subsection{Relationship between quasi-Einstein and Einstein metrics}
We now briefly discuss the relationship between quasi-Einstein and Einstein metrics 
in the cohomogeneity one setting. The following is a cohomogeneity one version of Proposition 5 of \cite{KimKim}. 
\begin{prop}\label{PQETE}
 Suppose that $d_0=\frac{1}{m}$ is a positive integer and $(g,u)$ is a $G$-invariant quasi-Einstein metric on $G/H\times (0,1)$ with constant $\lambda$. 
 Then there exists $\mu>0$ such that if $(F,k)$ is a $d_0$-dimensional homogeneous Einstein manifold with Einstein constant $\mu$, then 
 $g\oplus e^{-2mu}k$ is a cohomogeneity one Einstein metric on $G/H\times (0,1)\times F$ with Einstein constant $\lambda$. 
\end{prop}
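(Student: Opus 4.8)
The plan is to verify directly that the metric $\bar g = g \oplus e^{-2mu}k$ on $\bar M = G/H \times (0,1) \times F$ satisfies $Ric(\bar g) = \lambda \bar g$ by computing its Ricci curvature in terms of the data on the two factors. The key structural fact is the standard formula for the Ricci curvature of a warped product: if $\bar g = g + \phi^2 k$ on $B \times F$ with $\phi = e^{-mu}$ depending only on $B$ and $(F,k)$ having $Ric(k) = \mu k$, then on $B$-directions $Ric(\bar g) = Ric(g) - \frac{d_0}{\phi} Hess_g(\phi)$, on mixed directions it vanishes, and on $F$-directions $Ric(\bar g) = \mu k - \phi^2 k \big(\frac{\Delta_g \phi}{\phi} + (d_0 - 1)\frac{|\nabla \phi|_g^2}{\phi^2}\big)$, where $d_0 = \dim F$. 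So the first step is to record this formula and substitute $\phi = e^{-mu}$, computing $Hess_g(\phi) = e^{-mu}(m^2 du\otimes du - m\, Hess_g(u))$ and the corresponding Laplacian term.

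The second step is to match the $B$-directions. Here $-\frac{d_0}{\phi}Hess_g(\phi) = -\frac{1}{m}\big(m^2 du\otimes du - m\, Hess_g(u)\big) = Hess_g(u) - m\, du\otimes du$ using $d_0 = 1/m$, so $Ric(\bar g)$ restricted to $B$-directions equals exactly $Ric(g) + Hess_g(u) - m\, du\otimes du = Ric^m_u(g) = \lambda g$ by hypothesis. The mixed directions vanish automatically, matching $\lambda \bar g$ there since $\bar g$ has no mixed components. This part is essentially forced; no freedom remains.

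The third step — and the one requiring the parameter $\mu$ — is the $F$-direction equation. The warped-product formula gives, after substituting $\phi = e^{-mu}$ and simplifying using $d_0 = 1/m$, an expression of the form $Ric(\bar g)|_F = \big(\mu - e^{-2mu} f(u, \nabla u)\big)k$ for some explicit scalar function $f$ built from $\Delta_g u$, $|\nabla u|^2_g$ and $m$. For this to equal $\lambda \bar g|_F = \lambda e^{-2mu} k$ one needs $\mu = e^{-2mu}\big(\lambda + f(u,\nabla u)\big)$, i.e. the right-hand side must be a constant independent of the point of $B$. The main obstacle is therefore to show that this scalar is indeed constant: this should follow by taking the trace of the quasi-Einstein equation over the $B$-directions (the first line of \eqref{QEECOS}, or equivalently contracting $Ric^m_u(g)=\lambda g$), which produces a scalar identity relating $\Delta_g u$, $|\nabla u|^2_g$ and the scalar curvature of $g$; combined with the known "$\mu$-equation" bookkeeping in the quasi-Einstein literature (cf. Proposition 5 of \cite{KimKim}), this pins down $\mu$ as a fixed positive constant. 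I would carry out this contraction explicitly, identify the resulting constant $\mu$, check its positivity (which uses $\lambda$ together with the sign of the Bakry–Émery data, or is simply part of the hypothesis that such $(F,k)$ exists), and conclude. Finally I would note that $\bar g$ is $G$-invariant and of cohomogeneity one for the $G$-action on $\bar M$ since $F$ is acted on trivially, completing the proof.
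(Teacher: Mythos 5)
The paper does not actually carry out a proof of this proposition: it states it as ``a cohomogeneity one version of Proposition 5 of \cite{KimKim}'' and then, just below, records the resulting formula
$\mu = vv'' + vv'\sum_i d_i y_i' + \left(\tfrac{1}{m}-1\right)v'^2 + \lambda v^2$, $v = e^{-mu}$,
citing Lemma 5 of \cite{HallQE}. Your direct verification via the warped-product Ricci formula is exactly the argument behind those references, and your $B$-direction computation (using $d_0 = 1/m$ to turn $-\tfrac{d_0}{\phi}Hess_g(\phi)$ into $Hess_g(u) - m\,du\otimes du$) and the vanishing of mixed terms are correct and complete; after substituting $\phi = e^{-mu}$ into the $F$-direction formula and specialising to the cohomogeneity-one setting, your $\mu$ reduces to the displayed expression in the paper.

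Two points need tightening. First, the constancy of $\mu$ does \emph{not} follow merely from ``taking the trace of the quasi-Einstein equation over the $B$-directions.'' Taking the trace gives a single pointwise scalar identity at each $t$, but does not by itself show that the scalar on the right-hand side of your $F$-direction equation is $t$-independent. The standard route is the twice-contracted second Bianchi identity applied to $Ric^m_u(g)=\lambda g$, which produces a first-order identity implying $\nabla \mu = 0$; equivalently, in the present ODE setting, one differentiates the expression for $\mu$ and substitutes \emph{both} lines of \eqref{QEECO} (not just the constraint equation) to check $\mu' = 0$. You do defer to \cite{KimKim} for this, which is acceptable, but the phrasing suggests the trace alone suffices, and it does not. (Relatedly, the positivity $\mu>0$ is simply asserted; in Kim--Kim it relies on compactness, and in the present boundary setting a justification is not given in the paper either.) Second, the closing sentence is wrong as stated: if $G$ acts trivially on $F$, the $G$-orbits in $G/H\times(0,1)\times F$ have codimension $1+d_0 > 1$, so $\bar g$ is \emph{not} cohomogeneity one with respect to $G$. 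One needs $F$ homogeneous --- say $F=K/L$ --- so that the $(G\times K)$-orbits $G/H\times\{t\}\times F$ have codimension one; this is precisely why the statement hypothesises that $(F,k)$ is a homogeneous Einstein manifold.
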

Proposition \ref{PQETE} allows us to use results about cohomogeneity one Einstein metrics to find cohomogeneity one quasi-Einstein metrics. For example, 
suppose $G/H$ satisfies Hypothesis \ref{IID} and we aim to find quasi-Einstein metrics with $g$ having the form \eqref{RMG} with $h^2=1$, and we prescribe 
 $y_i(0),y_i'(0),u(0)$ and $u'(0)$. For this initial-value problem, \eqref{QEECO} gives us $u''(0)$ explicitly, and
Lemma 5 of \cite{HallQE} implies that 
\begin{align*}
  \mu&=v v''+vv' \sum_{i=1}^{n}d_i y_i'+\left(\frac{1}{m}-1\right)v'^2+\lambda v^2,
\end{align*}
where $v=e^{-mu}$. Therefore, the $\mu$ in Proposition \ref{PQETE} can be found explicitly. 
The quasi-Einstein metric can then be recovered by solving the corresponding initial-value problem for the cohomogeneity one 
Einstein metric $g\oplus e^{-2mu}k$ on $G/H\times (0,1)\times F$, where $(F,k)$ has Einstein constant $\mu$. 

This method is less useful for the Dirichlet problem because we cannot know what $\mu$ is from merely the Dirichlet conditions without solving the 
entire equation. In particular, Theorem 2.2 of \cite{Buttsworth18}, which solves the Dirichlet problem for cohomogeneity one Einstein metrics,
is not directly applicable to the search for solutions of the Dirichlet problem for quasi-Einstein metrics. 


\subsection{Singular Solutions}
 The first goal of this paper is to study the singularities 
 of cohomogeneity one solutions to \eqref{QEE}. By singularity, we mean a solution of \eqref{QEECOS} on the interval $(0,1)$ which develops a singularity 
 at $t$ is sent to $0$.
 In \cite{EschenburgWang}, \cite{Buzano11} and \cite{Wink}, 
 solutions with singularities are constructed to the cohomogeneity one Einstein, Ricci soliton and quasi-Einstein equations respectively. 
 In this section, instead of dealing with the existence of such singular 
 solutions, we assume such a solution exists, and examine how quickly the solution becomes singular. 
 We show that the singularities form no quicker than those studied in  \cite{EschenburgWang}, \cite{Buzano11} and \cite{Wink} (see Remark \ref{DOQEEMB} for more details). 
 
 To make this more precise, assume we have a solution $(y,\xi)=(y_1,\cdots,y_n,\xi)$ of \eqref{QEECOS} for some $h>0$ and $\lambda\in \mathbb{R}$. We introduce 
 the function $R:\mathbb{R}^n\to \mathbb{R}^+$ with \begin{equation}\label{DOrR}
                                                     R(x)=\sum_{i=1}^{n}\beta_i e^{-2x_i}+\sum_{i,j,k=1}^{n}\gamma_{ij}^{k}e^{2x_i-2x_j-2x_k}
                                                    \end{equation}
                                                    as well as the diagonal matrix $L(t)=y'(t)$, and define $M(t)\ge 0$ so that 
\begin{align*}
M(t)^2&=\xi(t)^2+tr(L(t)^2)+R(y(t))\\
&=\left(\sum_{i=1}^{n}d_i y_i'(t)-u'(t)\right)^2+\sum_{i=1}^{n}d_i y_i'(t)^2+R(y(t)). 
\end{align*}
Recalling that $r$ is given by \eqref{DHR}, we see that $r$ is bounded by $R$, so if $M(t)$ is bounded on $(0,1)$, then the solution $(y,\xi)$ of 
\eqref{QEECOS} is also bounded in 
$C^2((0,1);\mathbb{R}^n)\times C^1((0,1);\mathbb{R})$. By the standard theory of ODEs, solutions of \eqref{QEECOS} can be extended as long as they are bounded, 
so any solution which is singular around $t=0$ satisfies  $\sup_{t> 0}M(t)=\infty$. In this sense we see that $M(t)$ 
controls the growth of a solution to \eqref{QEECOS}; a singularity 
occurs if and only if $M(t)$ is unbounded. 

For convenience, we now choose some 
$0<T<1$ and consider our solution of \eqref{QEECOS} on the half-closed interval $(0,T]$. 
We make one additional hypothesis on the homogeneous space $G/H$, and then state our main result. 
 \begin{hypothesis}\label{HN2}
For each $i$, the dimension of $\mathfrak{m}_i$ is at least $2$. 
\end{hypothesis}
\begin{remark}
There is a large variety of homogeneous spaces satisfying Hypothesis \ref{HN2}. For instance, 
all generalised flag manifolds satisfy Hypothesis \ref{HN2}. For the definition of a generalised flag manifold, see Chapter 7 of 
\cite{ArvFM}.
\end{remark}
 \begin{theorem}\label{MTFS}
 Fix $\lambda\in \mathbb{R}$, $h>0$ and $m>0$, and let $(g,u)$ be a singular cohomogeneity one quasi-Einstein metric on $G/H\times (0,T]$, where 
 $G/H$ is a homogeneous space satisfying Hypotheses \ref{IID} and \ref{HN2}. 
 Then for the associated pair $(y,\xi)$ solving \eqref{QEECOS} on $(0,T]$, 
 we have $\sup_{t> 0}M(t)t<\infty$. 
 \end{theorem}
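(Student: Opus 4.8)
The plan is to derive a differential inequality for $M(t)^2$ showing that $(M^2)'$ is controlled by $M^3$ (up to lower-order terms), which by integration yields $M(t) \le C/t$. First I would differentiate $M(t)^2 = \xi^2 + \mathrm{tr}(L^2) + R(y)$ along a solution of \eqref{QEECOS}, substituting $\xi' = -\mathrm{tr}(y'^2) - m(\mathrm{tr}(y')-\xi)^2 - h^2\lambda$ and $y'' = -\xi y' + h^2 r(y) - h^2\lambda I$. The term $\frac{d}{dt}\mathrm{tr}(L^2) = 2\,\mathrm{tr}(L L') = 2\,\mathrm{tr}(y' y'') = -2\xi\,\mathrm{tr}(y'^2) + 2h^2\,\mathrm{tr}(y' r(y)) - 2h^2\lambda\,\mathrm{tr}(y')$, and $\frac{d}{dt}R(y) = \nabla R(y)\cdot y'$. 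After expanding, the leading cubic contributions are of the schematic form $\xi \cdot \mathrm{tr}(y'^2)$, $\mathrm{tr}(y' r(y))$ and $\nabla R(y)\cdot y'$; each of these is bounded in absolute value by a constant times $(\xi^2 + \mathrm{tr}(L^2) + R(y))^{3/2} = M^3$, using that $|r_i(y)| \le C\,(R(y)+1)$ (which follows from comparing \eqref{DHR} with \eqref{DOrR}, as the excerpt notes) and that $|\nabla_i R(y)| \le C\,R(y)$ since each monomial $e^{-2x_i}$ or $e^{2x_i-2x_j-2x_k}$ in $R$ has bounded logarithmic derivative. Thus one obtains $(M^2)' \le C_1 M^3 + C_2 M^2 + C_3$ on $(0,T]$ for constants depending only on $G/H$, $h$, $\lambda$, $m$.

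The key subtlety — and this is where Hypothesis \ref{HN2} enters — is that the naive cubic estimate above is not quite enough: the sign of the dominant term $-2\xi\,\mathrm{tr}(y'^2)$ can be either good or bad, and one needs the $-m(\mathrm{tr}(y')-\xi)^2\,\mathrm{tr}(\text{something})$-type contributions coming from $\xi'$ to absorb the bad part, \emph{or} one needs the structure of $R$ to help. I expect the correct route is to show that whenever $M(t)$ is large, the negative terms coming from $-m(\mathrm{tr}(y')-\xi)^2$ in $\xi'$ and from the $-2\xi\,\mathrm{tr}(y'^2)$ term (with the favorable sign, when $\xi>0$) dominate, so that in fact $(M^2)' \le C M^3$ with the right constant; the condition $d_i \ge 2$ is presumably used to control the cross-terms $\mathrm{tr}(y' r(y))$ and $\nabla R(y) \cdot y'$ against $\mathrm{tr}(L^2)$ and $R(y)$ via Cauchy–Schwarz with dimension-weighted inner products, since the coefficients $d_i$ appear in $\xi$ and in the trace but the curvature terms $r_i$, $R$ are not weighted the same way — the $d_i \ge 2$ keeps these weightings comparable (this is the analogue of what happens for Einstein metrics, cf. \cite{EschenburgWang}).

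Having established a clean inequality of the form $(M^2)'(t) \le C\,M(t)^3$ for $t \in (0,T]$ whenever $M(t) \ge M_0$ (some threshold), the final step is elementary: setting $\phi = M^2$, we get $\phi' \le C\,\phi^{3/2}$, so $\frac{d}{dt}\big(\phi^{-1/2}\big) = -\tfrac12\phi^{-3/2}\phi' \ge -\tfrac{C}{2}$, hence $\phi(t)^{-1/2} \ge \phi(T)^{-1/2} - \tfrac{C}{2}(t-T) \ge \tfrac{C}{2}(T - t) \ge -\tfrac{C}{2}t + \text{const}$; more carefully, integrating from $t$ up to $T$ gives $M(t)^{-1} \ge M(T)^{-1} - \tfrac{C}{2}(T-t)$, which for $t$ near $0$ is not yet of the form $\ge c\,t$. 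The right way is to integrate the inequality over $[t, t']$ with $t' = t$ fixed appropriately: from $\frac{d}{ds}(M(s)^{-1}) \ge -\tfrac{C}{2}$ we get $M(t)^{-1} \le M(s)^{-1} + \tfrac{C}{2}(s-t)$ for $s \ge t$; letting $s \to 0$ is not allowed, so instead one argues that if $M(t_k)t_k \to \infty$ along some sequence $t_k \to 0$, then on a short interval to the right of $t_k$ the quantity $M$ stays comparably large, and integrating the reciprocal inequality over that interval forces $M$ to have been finite-valued in a way that contradicts $M(t_k)t_k \to \infty$ — equivalently, $M(t) \le \max\{M_0,\ 2/(Ct)\}$ directly from $\phi' \le C\phi^{3/2}$ by the comparison ODE $\psi' = C\psi^{3/2}$, $\psi(t) = 4/(C^2 t^2)$, whose solutions blowing up at $0$ are exactly the bound we want. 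The main obstacle is the second paragraph: identifying precisely how Hypothesis \ref{HN2} makes the cross-terms subordinate so that the coefficient in $(M^2)' \le C M^3$ does not depend badly on the solution.
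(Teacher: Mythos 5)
The paper's proof is a blow-up/compactness argument, not a direct differential-inequality argument: it chooses a sequence $t^{(k)}\to 0$ where $M(t)(t-T^{(k)})$ is maximal, rescales by $M(t^{(k)})$, applies Arzel\`a--Ascoli to extract a limiting solution $(\tilde L,\tilde\xi,\tilde r)$ of the autonomous system \eqref{LEQE} on all of $\mathbb{R}$, and then derives a contradiction from the structure of that limit. Your proposal is a genuinely different route, but it has a gap at its final step that I do not see how to close without essentially reinventing the paper's argument.

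The differential inequality you propose, $(M^2)'\le C M^3$ (and in fact the two-sided version $\left|(M^2)'\right|\le C M^3$, which does follow from the estimates you sketch), is correct but \emph{not strong enough} to conclude $\sup_t M(t)t<\infty$. Take $\phi(t)=M(t)^2=t^{-4}$; then $\left|\phi'\right|=4t^{-5}\le C\phi^{3/2}=Ct^{-6}$ for all $t\le C/4$, so $\phi$ satisfies your two-sided inequality near $0$, yet $M(t)t=t^{-1}\to\infty$. The comparison argument with $\psi(t)=4/(C^2t^2)$ (which solves $\psi'=-C\psi^{3/2}$, not $\psi'=C\psi^{3/2}$) requires you to know $\phi(t_0)\le\psi(t_0)$ at some initial time $t_0$, but that is exactly the bound $M(t_0)t_0\le 2/C$ you are trying to prove; there is no obvious non-circular choice of $t_0$. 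Integrating $\phi'\ge -C\phi^{3/2}$ forward from $t$ to $T$ only gives $\phi(t)^{-1/2}\ge\phi(T)^{-1/2}-\tfrac{C}{2}(T-t)$, which is vacuous when $\phi(T)^{-1/2}\le CT/2$. You acknowledge this is ``not yet of the form $\ge ct$,'' but the fix you gesture at does not resolve the circularity.

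The deeper issue is visible in which hypotheses are used. The counterexample $M\sim t^{-2}$ shows that faster-than-$1/t$ blow-up is compatible with the cubic differential inequality, so some extra structure must rule it out; this is precisely what Hypothesis \ref{HN2} and $m>0$ are for in the paper, and neither appears essentially in your argument. Hypothesis \ref{HN2} does not enter through Cauchy--Schwarz estimates on cross-terms as you conjecture; rather it enters via Lemma \ref{DOR}(iii), the lower bound $\inf_y\left|r(y)\right|/R(y)=c_3>0$, which in the blow-up limit forbids $\tilde L=\tilde\xi=\tilde r=0$ and hence forces $\tilde\xi$ to be bounded away from zero somewhere. The hypothesis $m>0$ then makes the first equation of \eqref{LEQE} a genuine Riccati-type inequality $\tilde\xi'\le -q\tilde\xi^2$ with $q>0$, which forces finite-time blow-up of $\tilde\xi$ in the rescaled variable and contradicts the uniform bound of Lemma \ref{NRIB}. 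Your differential inequality approach, to work, would need to convert these same two ingredients into a sign or coefficient refinement of $(M^2)'\le CM^3$ that excludes the $t^{-4}$ comparison solution, and I do not see how to do that without effectively redoing the paper's rescaling analysis.
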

 The proof of this result is provided in Section 3. 
 \begin{remark}\label{DOQEEMB}
  In \cite{EschenburgWang}, 
  cohomogeneity one Einstein metrics are found subject to smoothness conditions. 
  These solutions consist of a family of Riemannian metrics $g(t)$ on a homogeneous space $G/H$, 
  with $L(t)$ denoting the associated shape operator, and $r(t)$ denoting the Ricci curvature of the homogeneous space. Here, $g(t)$, $L(t)$ and $r(t)$ are treated 
  as three families of linear operators on $\mathfrak{m}$, the tangent space of $G/H$.
  A solution of the Einstein equation is found so that $g$, $L$ and $r$ all split into regular and singular parts. The singular parts of $g,L$ and $r$ behave like $t^2,\frac{1}{t}$ and $\frac{1}{t^2}$ respectively. In our context, $L$ and $r$ have the same role as they do in \cite{EschenburgWang}, 
  so the Einstein metrics found in \cite{EschenburgWang} satisfy $\sup_{t> 0}M(t)t<\infty$ if we take $u=0$. 
  For cohomogeneity one Ricci solitons, the additional constraint that $u'(0)=0$ is imposed in \cite{Buzano11}, 
  in which case $t\xi=(tr(L)-u')t$
  is also bounded, so we again have  $\sup_{t> 0}M(t)t<\infty$. Similar conditions are imposed in \cite{Wink}. Thus, Theorem \ref{MTFS} demonstrates that, under certain conditions,
  singularities 
  of the cohomogneity one quasi-Einstein equations form no faster than the singularities already studied for certain cohomogeneity one geometric equations. 
 \end{remark}

 \subsection{Dirichlet Conditions}
The second goal of this paper is to study the problem of solving \eqref{QEE} subject to Dirichlet conditions for $g$ and $u$. 
\begin{theorem}\label{DCT}
 Fix $\lambda\in \mathbb{R}$, two $G$-invariant Riemannian metrics $\hat{g}_0,\hat{g}_1$ on a homogeneous space $G/H$ satisfying Hypothesis \ref{IID}, and two numbers $u_0,u_1$. 
 Then there exists a one-parameter 
 family of quasi-Einstein metrics $(g,u)$ on $G/H\times (0,1)$ with constant $\lambda$, and $g$ having the form \eqref{RMG} such that for $i=0,1$,
 $g_i=\hat{g}_i$ and 
 $u(i)=u_i$. 
\end{theorem}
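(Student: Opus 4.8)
The plan is to reduce \eqref{QEE} to the ODE system \eqref{QEECO} --- after the reparametrisation making $h$ constant, so that $h$ is precisely the length of $[0,1]$ with respect to $g$ --- and then to solve this system by an implicit function theorem with $h$ as deformation parameter, based at $h=0$. I would work on the affine Banach space $\mathcal{X}$ of arrays $(y,u)=(y_1,\dots,y_n,u)\in C^{2,\alpha}([0,1])^{n+1}$ whose endpoint values realise the prescribed Dirichlet data (the $y_i(0),y_i(1)$ being read off from $\hat{g}_0,\hat{g}_1$, and $u(0)=u_0$, $u(1)=u_1$), and let $\Phi(y,u,h)\in C^{0,\alpha}([0,1])^{n+1}$ denote the residual of \eqref{QEECO}, defined for $(y,u)$ near a base point and $h$ near $0$. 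Since $\Phi$ is smooth in all its arguments, including up to $h=0$, it is enough to (i) produce a solution of $\Phi(\cdot,\cdot,0)=0$ in $\mathcal{X}$, and (ii) show that $D_{(y,u)}\Phi$ there, acting between the subspaces with homogeneous Dirichlet data, is an isomorphism; the implicit function theorem then yields a smooth curve $h\mapsto(y_h,u_h)$ of solutions for $h$ in some interval $(0,h_0)$, and this is the asserted family, its members being pairwise distinct because $h$ is their length.

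For step (i), putting $h=0$ removes the curvature terms $r_i(y)$ and the $\lambda$-terms from \eqref{QEECOS}, leaving $\xi'=-tr(y'^2)-m(u')^2$, $\,y_i''=-\xi y_i'$, together with $u'=tr(y')-\xi$. From $y_i''=-\xi y_i'$ each $y_i'$ is a fixed multiple of $\phi(t):=\exp(-\int_0^t\xi)>0$, so on passing to the independent variable $s=\int_0^t\phi$ the functions $y_i$ become affine in $s$, with slopes pinned down by the Dirichlet data, and after rescaling the $s$-interval to unit length all that survives is a single scalar second-order equation --- conveniently, a Riccati-type equation for $u'$. Its Dirichlet problem is solvable for every choice of data: an exponential change of the unknown turns the right-hand side non-negative, so the transformed unknown is convex and the required a priori bounds follow. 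Unwinding the substitutions gives a base solution $(y_0,u_0)\in\mathcal{X}$ of $\Phi(\cdot,\cdot,0)=0$. (Equivalently, step (i) can be phrased as a finite-dimensional shooting argument, the $h=0$ flow being explicit enough that the map from initial velocities to terminal position can be solved for the prescribed endpoints.)

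For step (ii), the coefficient matrix of $(y_1'',\dots,y_n'',u'')$ in \eqref{QEECO} has determinant $\pm 1$, so $D_{(y,u)}\Phi(y_0,u_0,0)$ is a non-degenerate linear second-order ODE system with Dirichlet conditions, hence Fredholm of index zero, and it remains only to exclude a nonzero kernel element. I would do this at $h=0$, where $(y_0,u_0)$ is explicit: in the reduced variables the linearised system is triangular over a scalar equation of the form $\delta''=c(t)\delta'$, whose only solution with homogeneous Dirichlet data is $\delta\equiv0$ (because $\delta'$ has constant sign), and the remaining linearised components are then forced to vanish too. Transporting this back through the (solution-dependent, but locally smoothly invertible) change of variables shows the kernel is trivial at $h=0$; openness of the invertibility condition then gives the same for small $h>0$, completing the application of the implicit function theorem.

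I expect step (ii) to be the main obstacle: the linearised system genuinely couples the $\delta y_i$ with $\delta u$, and the point of carrying the argument out at $h=0$ rather than for general $h$ is exactly that there the base solution is explicit and the coupling is mild enough to disentangle directly. For large $h$ the curvature terms $r_i(y)$ can reintroduce a kernel, which is consistent with the loss of existence or uniqueness for large $h$ noted in the introduction.
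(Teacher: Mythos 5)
Your overall reduction to the ODE system \eqref{QEECO} with $h$ as the small parameter is the same as the paper's, and your observation that the leading coefficient matrix has determinant $\pm 1$ (so the problem is a non-degenerate second-order system) is correct. However, the mechanism you use to pass from $h=0$ to $h>0$ is genuinely different from the paper's, and I think it contains a gap that the paper's strategy was designed specifically to avoid.

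The paper does not use the implicit function theorem. It formulates \eqref{QEECOS} with boundary conditions \eqref{QEEWP} as a fixed-point problem $(y,\xi)=H(p,h^2,y,\xi)$ for a completely continuous map $H$, where $p\in[0,1]$ homotopes the boundary data from trivial ($p=0$) to the prescribed one ($p=1$). The work of Section 4 is an a priori $C^0$ bound (Lemma \ref{APEDC}), proved by a blow-up/compactness analysis (Lemma \ref{CBUS}) with several cases, which shows that solutions of the $h=0$ system stay in a fixed bounded set uniformly in $p$. This pins down a domain $\Omega$ on which the Schauder degree is defined; the degree is computed to be $1$ at $p=0$ (where the only solution is zero and the linearisation is the identity) and is transported to $p=1$ by homotopy invariance, then to small $h^2>0$ by a further homotopy. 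Crucially, this yields existence at $p=1,h=0$ and for small $h^2>0$ \emph{without} ever verifying that the linearisation at those solutions is invertible. Your approach, by contrast, demands that $D_{(y,u)}\Phi$ at the $h=0$ base solution be an isomorphism; if it has a nontrivial kernel, the implicit function theorem simply does not apply and your argument collapses.

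That is exactly where I see the gap: your Step (ii). Linearising $y_i''=-\xi y_i'$ gives $\delta y_i''=-\xi\,\delta y_i'-y_i'\,\delta\xi$, and $\delta\xi$ is sourced by $\sum d_k\,\delta y_k'$ and $\delta u'$ through the linearised $\xi$-equation, so the $(n+1)$-dimensional linearised system is fully coupled, not triangular. Your proposed decoupling via the substitution $s=\int_0^t\phi$ uses a solution-dependent change of variables ($\phi=\exp(-\int_0^t\xi)$ depends on $\xi$), so linearising the substitution itself contributes additional cross terms, and the claim that everything collapses to a scalar equation $\delta''=c(t)\delta'$ with sign-definite $\delta'$ is not substantiated. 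You flag this as ``the main obstacle,'' and I agree; as written, this step is an assertion rather than an argument, and it is not obvious that the conclusion is even true. (The paper provides no information either way, since its degree-theoretic route never needs it.) Your Step (i) is also not a direct construction: the scaling of $s$, the slope constants $A_i$, and the constraint $\int_0^1\xi=c$ all depend on $\int_0^1\phi$, which in turn depends on $\xi$, so what you describe is really a fixed-point or shooting problem in its own right. You concede this in your parenthetical remark, but a shooting argument needs an a priori compactness/boundedness input of precisely the kind the paper's Lemma \ref{APEDC} supplies, so there is no free lunch here. In short: if you can close Step (ii), your method would actually give more (local uniqueness and smooth $h$-dependence, neither of which the paper claims); but as it stands, Step (ii) is an open gap, and the paper's use of degree theory with a homotopy in the boundary data is the mechanism that sidesteps exactly this difficulty.
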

\begin{remark}
 For Theorem \ref{DCT}, we do not need to assume that $G/H$ is simply connected. 
\end{remark}

Let $\hat{g}_0,\hat{g}_1$ be two $G$-invariant Riemannian metrics on the homogeneous space $G/H$. 
Since we impose Hypothesis \ref{IID},
there exists two arrays of real numbers $\{a_i\}_{i=1}^{n}$ and $\{b_i\}_{i=1}^{n}$ such that for all $X,Y\in \mathfrak{m}$, 
\begin{align*}
 \hat{g}_0(X,Y)=\sum_{i=1}^{n}e^{2a_i}Q(pr_{\mathfrak{m}_i}X,pr_{\mathfrak{m}_i}Y),\\
 \hat{g}_1(X,Y)=\sum_{i=1}^{n}e^{2b_i}Q(pr_{\mathfrak{m}_i}X,pr_{\mathfrak{m}_i}Y).\\
\end{align*}
If we assume our metric $g$ has the form of \eqref{RMG}, to prove Theorem \ref{DCT}, it suffices to find a one-parameter family of solutions of \eqref{QEECO} subject to the boundary conditions
\begin{align}\label{QEEWP0}
\begin{split}
 y_i(0)=a_i, &\qquad y_i(1)=b_i,\\
 u(0)=u_0, &\qquad  u(1)=u_1. 
 \end{split}
\end{align}
If we define $c=\sum_{i=1}^{n}d_i(b_i-a_i)-(u_1-u_0)$, solving \eqref{QEECO} subject to \eqref{QEEWP0} is equivalent to solving \eqref{QEECOS} subject to the conditions 
\begin{align}\label{QEEWP}
\begin{split}
 y_i(0)=a_i, &\qquad y_i(1)=b_i,\\
 \int_{0}^{1}\xi &=c.
 \end{split}
\end{align} 
In Section 4, we find a $K>0$ such that a solution $(y,\xi)$ of \eqref{QEECOS} subject to \eqref{QEEWP} exists for each $0<h^2<K$, so we can take $h$ to be our parameter in Theorem \ref{DCT}. 
In Section 5, we demonstrate that we cannot expect existence and uniqueness without $h^2$ being small.
\section{Singularity Analysis}
In this section, we prove Theorem \ref{MTFS}. 
To do this, we first assume that we have a solution $(y,\xi)$ of \eqref{QEECOS} defined on the interval $(0,T]$, 
and suppose that the solution becomes unbounded around $t=0$. We can assume without loss of generality that $h^2=1$, since changing $h^2$ simply 
has the effect of changing the 
interval $(0,T]$ to some other interval $(0,T^*]$. 
The properties of $r$ in \eqref{DHR} are important to this analysis, and the relevant facts about $r$ are given in the following lemma. 
\begin{lemma}\label{DOR}
Let $r$ be the function with components given by \eqref{DHR} and $R$ as in \eqref{DOrR}. Then:

(i) $\sup_{y\in \mathbb{R}^n}\frac{\left|r(y)\right|}{R(y)}=:c_1<\infty$

(ii) $\sup_{y\in \mathbb{R}^n}\frac{\left|Dr(y)\right|}{R(y)}=:c_2<\infty$, where $Dr$ is the Jacobian matrix of $r$

(iii) $\inf_{y\in \mathbb{R}^n} \frac{\left|r(y)\right|}{R(y)}=:c_3>0$, if Hypothesis \ref{HN2} is satisfied. 
\end{lemma}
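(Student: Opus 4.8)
# Proof Proposal for Lemma \ref{DOR}

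The plan is to analyze the ratio $r(y)/R(y)$ term by term, exploiting the structural similarity between the two functions. First I observe that both $r_i(y)$ and $R(y)$ are built out of the same monomials: $R$ collects the terms $\beta_i e^{-2x_i}$ and $\gamma_{ij}^k e^{2x_i - 2x_j - 2x_k}$ with non-negative coefficients, while each $r_i(y)$ is a signed combination of the monomials $\frac{\beta_i}{2}e^{-2y_i}$ and $\gamma_{ik}^l \frac{e^{4y_i} - 2e^{4y_k}}{4} e^{-2y_i - 2y_k - 2y_l} = \frac{1}{4}\gamma_{ik}^l e^{2y_i - 2y_k - 2y_l} - \frac{1}{2}\gamma_{ik}^l e^{-2y_i + 4y_k - 2y_k - 2y_l} = \frac14\gamma_{ik}^l e^{2y_i - 2y_k - 2y_l} - \frac12 \gamma_{ik}^l e^{2y_k - 2y_i - 2y_l}$ (after rewriting $e^{4y_k}/e^{2y_i + 2y_k + 2y_l} = e^{2y_k - 2y_i - 2y_l}$). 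Every monomial appearing in any component $r_i$ is, up to a bounded constant, one of the monomials appearing in $R$ — possibly after relabeling indices. This gives a pointwise bound $|r_i(y)| \le C \sum_{\text{monomials } \phi \text{ in } r_i} \phi(y) \le C' R(y)$ with $C'$ depending only on the $\beta$'s, $\gamma$'s, $n$; summing (or taking the norm) over $i$ yields (i). I would phrase this cleanly by noting there is a constant $A$ (depending only on $G/H$) with $|r(y)| \le A\, \widetilde{R}(y)$ where $\widetilde R$ is the sum of \emph{all} monomials of the form $e^{-2x_i}$ or $e^{2x_i - 2x_j - 2x_k}$ that actually occur (with nonzero coefficient) in $R$, and then $\widetilde R \le B \cdot R$ since $R$ already contains each such monomial with a positive coefficient bounded below by $\min$ of the nonzero $\beta_i, \gamma_{ij}^k$.

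For (ii), the key point is that differentiating a monomial of the form $e^{\langle v, x\rangle}$ (a product of exponentials) with respect to $x_k$ produces $v_k e^{\langle v, x \rangle}$, i.e. the same monomial times a bounded constant (here $|v_k| \le 6$ for all monomials involved). Hence each entry of the Jacobian $Dr(y)$ is again a signed combination of the monomials in $\widetilde R$ with coefficients bounded in terms of $G/H$, and the same argument as in (i) gives $|Dr(y)| \le c_2 R(y)$. I would just remark that $R$ is, up to constants, invariant under the operations "multiply a monomial by one of its exponents", which is exactly what taking $\partial_{x_k}$ does, so the estimate is inherited from (i).

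Part (iii) is the substantive one and where the hypothesis $d_i \ge 2$ enters. The content is that $r(y)$ cannot be uniformly small relative to $R(y)$: there is no cancellation that can kill all of $R$ simultaneously. I would argue by a compactness/scaling reduction. Since $R$ is a finite sum of exponential monomials, for any fixed $y$ at least one monomial $\phi(y)$ satisfies $\phi(y) \ge \frac{1}{N} R(y)$ where $N$ is the number of monomials. I would then show that for \emph{each} monomial $\phi$ occurring in $R$, there is some component $r_i$ in which $\phi$ appears with a coefficient that is not cancelled by the other monomials \emph{when $\phi$ is the dominant one}. The danger is a term like $\frac14 \gamma_{ik}^l e^{2y_i - 2y_k - 2y_l}$ being cancelled by $-\frac12 \gamma_{i'k'}^{l'} e^{2y_{k'} - 2y_{i'} - 2y_{l'}}$; the role of Hypothesis \ref{HN2}, i.e. $d_i \ge 2$, is presumably to rule out precisely the low-dimensional degenerate cases (e.g. $d_i = 1$ forces $\gamma_{ii}^i$-type structure constants to vanish or to satisfy symmetries that would permit such cancellation, or it controls the structure constants via the relation $d_i \beta_i = \sum d_k \gamma_{ik}^l$ and the symmetries $\gamma_{ik}^l = \gamma_{il}^k$, $d_k \gamma_{ik}^l = d_i \gamma_{ki}^l$). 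Concretely, I expect to use that $\sum_i d_i r_i(y) \ge 0$ always and, more sharply, to bound $\sum_i d_i r_i(y)$ below by a positive multiple of $R(y)$ directly — recall $\sum_i d_i r_i$ equals $\frac12 \sum_i d_i \beta_i e^{-2y_i} + \frac14 \sum_{i,k,l} d_i \gamma_{ik}^l \frac{e^{4y_i} - 2e^{4y_k}}{e^{2y_i + 2y_k + 2y_l}}$; using the symmetry $d_i\gamma_{ik}^l = d_k \gamma_{ki}^l$ one can symmetrize the cubic sum and, together with $d_i\ge 2$, show the whole expression dominates $c\cdot R(y)$. Then $|r(y)| \ge \frac{1}{\max_i d_i \cdot d}\, |\sum_i d_i r_i(y)| \ge c_3 R(y)$. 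The main obstacle is making this symmetrization argument for the cubic terms airtight and identifying exactly where $d_i \ge 2$ is needed — I would isolate it as a small algebraic sublemma about the structure constants $\beta_i, \gamma_{ik}^l$ before assembling the bound.
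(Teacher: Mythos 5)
Your argument for (i) and (ii) matches the paper's: every exponential monomial appearing in any $r_i$ (or any entry of $Dr$) also appears in $R$ with a positive coefficient, so the ratio is bounded above by a constant depending only on the structure constants of $G/H$; the paper dispatches these two parts in a single sentence with the same observation.

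Part (iii) is where your plan breaks down, and the gap is substantive. You propose to bound $\sum_i d_i r_i(y)$ --- which, up to normalisation, is the scalar curvature $S(y)$ of the metric determined by $y$ --- below by a positive multiple of $R(y)$, and then to invoke $|r| \gtrsim |\sum_i d_i r_i|$. The second step is fine, but the first is false, even under Hypothesis \ref{HN2}. The scalar curvature of a $G$-invariant metric on a compact homogeneous space is \emph{not} always non-negative, and on the slice $R(y)=1$ it is not bounded away from zero. A concrete test case: take $G/H = SU(3)/T^2$, which has three modules each of dimension $2$, so Hypothesis \ref{HN2} holds. Writing $x_i = e^{2y_i}$, the cubic part of $S$ is $-\tfrac{1}{2}[123]\bigl(\tfrac{x_1}{x_2x_3}+\tfrac{x_2}{x_1x_3}+\tfrac{x_3}{x_1x_2}\bigr)$, and sending $x_1 \to \infty$ with $x_2=x_3=1$ fixed makes $S \to -\infty$ while $R$ blows up only at the same rate, so $S/R \to -1$. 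Thus $S$ changes sign on $R^{-1}(1)$, it vanishes somewhere on that set, and no bound of the form $S \ge cR$ or even $|S| \ge cR$ can hold. Your proposed reduction therefore proves nothing, and the place you flag as ``the main obstacle'' (the symmetrisation sublemma) cannot be made airtight because the inequality you want there is simply untrue. The role of Hypothesis \ref{HN2} is not to control the sign of the scalar curvature.

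The paper's proof of (iii) is genuinely different and does not proceed through pointwise estimates on combinations of the $r_i$. It first reduces by scaling to showing $\inf_{R(y)=1}|r(y)|>0$. The set $R^{-1}(1)$ is non-compact, so the key step is Graev's moment map $\mu = \nabla R$, which is a diffeomorphism from $R^{-1}(1)$ onto the interior of a compact convex polytope $\Delta$. One then extends $r$ continuously to $\partial\Delta$ (using Graev's description of the boundary strata via Lie-algebra contractions), and argues that $r$ cannot vanish anywhere on the compact set $\Delta \cup \partial\Delta$: a zero in the interior would give a Ricci-flat $G$-invariant metric, forcing $[\mathfrak{m},\mathfrak{h}]=0$ by Bochner's theorem and hence a splitting of $\mathfrak{m}$ into one-dimensional $\mathrm{Ad}(H)$-modules, while a zero on the boundary forces $[\mathfrak{m}_i,\mathfrak{h}]=0$ for the modules in some proper index set by Graev's Theorem 3 and Lemma 3 --- in either case contradicting $d_i\ge 2$. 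So Hypothesis \ref{HN2} enters precisely to rule out Ricci-flat (possibly degenerate) invariant metrics, not to force a positive lower bound on scalar curvature. If you want to rescue a direct analytic approach you would, at minimum, need to identify a quantity genuinely controlled from below by $R$, and the scalar curvature is not it.
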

\begin{proof}
Statements (i) and (ii) are straightforward to verify since $r$ consists of sums and differences of exponential terms, all of which appear summed in $R$. 

To verify (iii), we note that it suffices to demonstrate that 
\begin{align}\label{EFCP}
 \inf_{R(y)=1} \frac{\left|r(y)\right|}{R(y)}= \inf_{R(y)=1} \left|r(y)\right|>0.
\end{align}
This is because of the scaling properties of 
$r$ and $R$. 
To establish \eqref{EFCP}, we make extensive use of results of \cite{Graev}. 
Firstly, $\mu=\nabla R$ is a diffeomorphism from $R^{-1}(1)$ to $\Delta$, 
where $\Delta\subset \mathbb{R}^n$ is a convex polytope depending on the homogeneous space $G/H$. This is Theorem 1 of \cite{Graev}, but note that the result is essentially a consequence of 
an un-numbered lemma in Section 4.2 of \cite{Fulton}.  
With this diffeomorphism, we can now treat $r$ as a function from $\Delta$ to $\mathbb{R}^{n}$, 
and it suffices to demonstrate that $\inf _{x\in\Delta} \left|r(x)\right|>0$. 

Sections 4 and 5 of \cite{Graev} explain how $r$ can be extended continuously to $\Gamma=\partial \Delta$. 
This is done using contractions of Lie algebras. 
Now if $r(x)=0$ for some $x\in \Delta$, then Bochner's Theorem (see, for example, Theorem 1.84 of \cite{Besse}) implies that the isometry group of $G/H$ with respect to the metric $x$ is abelian. 
From this we conclude that $[\mathfrak{m},\mathfrak{h}]=0$ (cf. Section 2.7 of \cite{APMG}), so $\mathfrak{m}$ 
is completely reducible into one-dimensional $Ad(H)$-invariant modules, which contradicts Hypothesis \ref{HN2}. 
Furthermore, if $x\in \Gamma$ and $r(x)=0$, Theorem 3 and Lemma 3 of \cite{Graev} together
imply the existence of a proper subset $I\subset \{1,\cdots,n\}$ such that $[\mathfrak{m}_i,\mathfrak{h}]=0$ for each $i\in I$. We arrive at another contradiction 
with Hypothesis \ref{HN2}. 
Therefore, the continuous function $r$ does not achieve a value of $0$ on the compact set $\Delta\cup \Gamma$, so $\inf _{x\in \Delta} \left|r(x)\right|>0$ as required. 
\end{proof}
Suppose we have a solution of \eqref{QEECOS} defined on $(0,T]$ such that 
$M(t)t$ becomes unbounded around $0$. 
Choose some sequence $\{T^{(k)}\}_{k=1}^{\infty}\subset (0,T]$ such that $T^{(k)}\to 0$ and $M(T^{(k)})T^{(k)}\to \infty$, and choose $t^{(k)}\ge T^{(k)}$ such that 
\begin{align*}
M(t^{(k)})(t^{(k)}-T^{(k)})= \sup_{t\in [T^{(k)},T]} M(t)(t-T^{(k)})\ge 0.
\end{align*}
Consider the rescaled functions 
\begin{align}\label{DRSF}
\begin{split}
y^{(k)}(t)&=y\left(t^{(k)}+\frac{t}{M(t^{(k)})}\right),\\
L^{(k)}(t)&=\frac{1}{M(t^{(k)})}L\left(t^{(k)}+\frac{t}{M(t^{(k)})}\right),\\
\xi^{(k)}(t)&=\frac{1}{M(t^{(k)})}\xi\left(t^{(k)}+\frac{t}{M(t^{(k)})}\right),\\
\end{split}
\end{align}
which can be defined for $t\in [M(t^{(k)})(T^{(k)}-t^{(k)}), M(t^{(k)})(T-t^{(k)})]$. 
These functions solve 
\begin{align}\label{EFRF}
\begin{split}
 (\xi^{(k)})'&=-tr((L^{(k)})^2)-m(tr(L^{(k)})-\xi^{(k)})^2-\frac{\lambda}{M(t^{(k)})^2}, \\
 (L^{(k)})'&=-\xi^{(k)} L^{(k)}+\frac{r(y^{(k)})-\lambda I}{M(t^{(k)})^2},\\ 
 (y^{(k)})'&=L^{(k)}.
\end{split}
\end{align}
We see that 
\begin{align*}
\sup_{k} M(t^{(k)})(t^{(k)}-T^{(k)})&=\sup_{k}\sup_{t\in(0,T]} M(t)(t-T^{(k)})\\
&=\sup_{t\in (0,T]}\sup_{k}M(t)(t-T^{(k)})\\
&=\sup_{t\in (0,T]}M(t)t \\
&=\infty,
\end{align*}
which implies that, by taking a subsequence in $k$ if necessary, $M(t^{(k)})(T^{(k)}-t^{(k)})\to -\infty$. This implies that $M(t^{(k)})\to \infty$, so we must also have $t^{(k)}\to 0$ (since $M(t)$ is bounded away from $0$) and $M(t^{(k)})(T-t^{(k)})\to \infty$. Therefore, the rescaled functions of \eqref{EFRF} are defined for increasingly large intervals. 
The following lemma demonstrates that these rescaled functions are bounded on any fixed large interval.
\begin{lemma}\label{NRIB}
For all $a>0$, there exists $N\in \mathbb{N}$ such that for $k\ge N$:

(i) $tr(L^{(k)}(t)^2)+\xi^{(k)}(t)^2+\frac{\left|r(y^{(k)}(t))-\lambda I\right|}{M^2(t^{(k)})}\le c_1+2$ for all $t\in [-a,a]$ 

(ii) the derivative of $\frac{r(y(t))-\lambda}{M^2(t^{(k)})}$ in $t$ is bounded in $[-a,a]$ by $c_2\sqrt{c_1+2}$.
\begin{flushleft}
Here, $c_1$ and $c_2$ are the constants defined in Lemma \ref{DOR}. 
\end{flushleft}
\end{lemma}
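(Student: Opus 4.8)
The plan is to follow, along the rescaled interval, the very quantity $M$ whose supremum is driving the blow-up. I would introduce
\[
\tilde{M}^{(k)}(t)=\frac{1}{M(t^{(k)})}\,M\!\left(t^{(k)}+\frac{t}{M(t^{(k)})}\right),
\]
which by the definitions in \eqref{DRSF} satisfies $\tilde{M}^{(k)}(t)^2=\xi^{(k)}(t)^2+tr((L^{(k)}(t))^2)+R(y^{(k)}(t))/M(t^{(k)})^2$ and $\tilde{M}^{(k)}(0)=1$. The first observation is that both parts of the lemma reduce to a single claim: for each fixed $a>0$ there is a sequence $\epsilon_k\to 0$ such that $\tilde{M}^{(k)}(t)\le 1+\epsilon_k$ for all $t\in[-a,a]$ once $k$ is large.

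The crux is to derive this claim from the maximality used to select $t^{(k)}$. For every $\tau\in[T^{(k)},T]$ one has $M(\tau)(\tau-T^{(k)})\le M(t^{(k)})(t^{(k)}-T^{(k)})=:\alpha_k$, and $\alpha_k\to\infty$ (this is precisely the statement $M(t^{(k)})(T^{(k)}-t^{(k)})\to-\infty$ established just above the lemma). Writing $\tau=t^{(k)}+t/M(t^{(k)})$ and noting $\tau-T^{(k)}=(\alpha_k+t)/M(t^{(k)})$, this inequality becomes $\tilde{M}^{(k)}(t)\,(\alpha_k+t)\le\alpha_k$, so $\tilde{M}^{(k)}(t)\le\alpha_k/(\alpha_k+t)$ whenever $\alpha_k+t>0$. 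Since the rescaled functions are defined on $[-\alpha_k,\,M(t^{(k)})(T-t^{(k)})]$ and the two endpoints tend to $-\infty$ and $+\infty$ respectively, for large $k$ the window $[-a,a]$ is contained in this interval, and there $\tilde{M}^{(k)}(t)\le\alpha_k/(\alpha_k-a)=:1+\epsilon_k$ with $\epsilon_k\to 0$; this is the claim.

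Granting the claim, part (i) follows quickly: from $\xi^{(k)}(t)^2+tr((L^{(k)}(t))^2)+R(y^{(k)}(t))/M(t^{(k)})^2=\tilde{M}^{(k)}(t)^2\le(1+\epsilon_k)^2$ and the bound $|r(y)-\lambda I|\le c_1 R(y)+\sqrt{d}\,|\lambda|$ coming from Lemma \ref{DOR}(i), the quantity in (i) is at most $\max(1,c_1)(1+\epsilon_k)^2+\sqrt{d}\,|\lambda|/M(t^{(k)})^2$, and since $c_1$ is fixed while $\epsilon_k\to 0$ and $M(t^{(k)})\to\infty$, this is $\le c_1+2$ for $k$ large (depending on $a$). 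For part (ii), I would differentiate the diagonal matrix $(r(y^{(k)}(t))-\lambda I)/M(t^{(k)})^2$ in $t$; by the chain rule together with $(y^{(k)})'=L^{(k)}$ from \eqref{EFRF}, the derivative equals $Dr(y^{(k)}(t))\,L^{(k)}(t)/M(t^{(k)})^2$, and using $|Dr(y^{(k)})|\le c_2 R(y^{(k)})$ (Lemma \ref{DOR}(ii)), $R(y^{(k)}(t))/M(t^{(k)})^2\le\tilde{M}^{(k)}(t)^2$, and $|L^{(k)}(t)|^2=tr((L^{(k)}(t))^2)\le c_1+2$ from part (i), its norm is bounded by $c_2\sqrt{c_1+2}$, up to the factor $(1+\epsilon_k)^2\to 1$ that is absorbed for $k$ large.

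The only genuinely delicate point is the second paragraph: one must recognise that the seemingly ad hoc functional $M(t)(t-T^{(k)})$ used to pick $t^{(k)}$ becomes, after the rescaling in \eqref{DRSF}, precisely $\tilde{M}^{(k)}(t)(\alpha_k+t)$, and that its boundedness by $\alpha_k$ together with $\alpha_k\to\infty$ pins the rescaled $M$ near $1$ on every fixed window $[-a,a]$. This is the point-selection/blow-up mechanism familiar from the analysis of geometric flows; once it is set up, parts (i) and (ii) are routine consequences of Lemma \ref{DOR} and the divergences $\alpha_k\to\infty$, $M(t^{(k)})\to\infty$.
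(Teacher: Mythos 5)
Your proposal is correct and follows essentially the same route as the paper's proof. The maximality of $t^{(k)}$, rewritten after rescaling as $\tilde{M}^{(k)}(t)(\alpha_k+t)\le\alpha_k$ with $\alpha_k\to\infty$, is exactly the inequality the paper uses (there stated as $M^2(\tau)/M^2(t^{(k)})\le(t^{(k)}-T^{(k)})^2/(\tau-T^{(k)})^2$ with uniform convergence of the right side to $1$ on $[-a,a]$), and parts (i) and (ii) then follow from Lemma~\ref{DOR} and $M(t^{(k)})\to\infty$ just as you describe; your explicit introduction of $\tilde M^{(k)}$ and $\alpha_k$ is only a notational repackaging of the same estimate.
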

\begin{proof}
We see that
 \begin{align*}
 &tr(L^{(k)}(t)^2)+\xi^{(k)}(t)^2+\frac{\left|r(y^{(k)}(t))\right|}{M^2(t^{(k)})}\\
 &\le tr(L^{(k)}(t)^2)+\xi^{(k)}(t)^2+c_1\frac{R(y^{(k)}(t))}{M^2(t^{(k)})}\\
  &=\frac{tr\left(L(t^{(k)}+\frac{t}{M(t^{(k)})})^2\right)+\xi\left(t^{(k)}+\frac{t}{M(t^{(k)})}\right)^2+c_1R\left(y(t^{(k)}+\frac{t}{M(t^{(k)})})\right)}{M^2(t^{(k)})}\\
  &\le \max\{1,c_1\} \frac{M^2(t^{(k)}+\frac{t}{M(t^{(k)})})}{M^2(t^{(k)})}\\
  &\le (c_1+1)\frac{(t^{(k)}-T^{(k)})^2}{(t^{(k)}+\frac{t}{M(t^{(k)})}-T^{(k)})^2}.
 \end{align*}
Now as $k$ is sent to $\infty$, $\frac{(t^{(k)}-T^{(k)})^2}{(t^{(k)}+\frac{t}{M(t^{(k)})}-T^{(k)})^2}$ converges to $1$ uniformly for $t\in [-a,a]$, 
so $$(c_1+1)\frac{(t^{(k)}-T^{(k)})^2}{(t^{(k)}+\frac{t}{M(t^{(k)})}-T^{(k)})^2}+\frac{\lambda}{M^2(t^{(k)})}\le c_1+2$$ for large $k$. 
This verfies assertion (i).

For assertion (ii), 
note that $$\frac{d\left(\frac{r(y^{(k)}(t))-\lambda}{M^2(t^{(k)})}\right)}{dt}=\frac{Dr(y^{(k)}(t))L^{(k)}(t)}{M^2(t^{(k)})}.$$ By (ii) of Lemma \ref{DOR}, 
$\left|Dr\right|(y^{(k)}(t))\le c_2R(y^{(k)}(t))$, so $\frac{\left|Dr(y^{(k)}(t))L^{(k)}(t)\right|}{M^2(t^{(k)})}\le c_2\left|L^{(k)}(t)\right|\le c_2 \sqrt{c_1+2}$ for large $k$. 
\end{proof}
\begin{cor}\label{AALA}
 For any $a>0$, there exists a subsequence of $(\xi^{(k)},L^{(k)},y^{(k)})_{k=1}^{\infty}$ such that 
 $L^{(k)}\to \tilde{L}$, $\xi^{(k)}\to \tilde{\xi}$ and $\frac{r(y^{(k)})-\lambda I}{M^2(t^{(k)})}\to \tilde{r}$ in $C^0[-a,a]$ as 
 $k\to \infty$, where $\tilde{L}$, $\tilde{\xi}$ and $\tilde{r}$ solve 
 \begin{align}\label{LEQE}
\begin{split}
\tilde{\xi}'&=-tr(\tilde{L}^2)-m(tr(\tilde{L})-\tilde{\xi})^2,\\
\tilde{L}'&=-\tilde{\xi}\tilde{L}+\tilde{r}.
\end{split}
\end{align}
\end{cor}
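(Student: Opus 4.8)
The goal is to prove Corollary \ref{AALA}, which upgrades the uniform bounds of Lemma \ref{NRIB} to a convergent subsequence whose limit solves the autonomous system \eqref{LEQE}. Here is the plan.

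\textbf{Setting up the compactness argument.} First I would fix $a > 0$ and work on the interval $[-a, a]$, which is eventually contained in the domain of the rescaled functions once $k$ is large, by the discussion preceding the corollary. By part (i) of Lemma \ref{NRIB}, for $k$ large the functions $L^{(k)}$ and $\xi^{(k)}$ are uniformly bounded on $[-a,a]$, and the quantity $\tfrac{r(y^{(k)})-\lambda I}{M^2(t^{(k)})}$ is uniformly bounded as well. Feeding these bounds back into the equations \eqref{EFRF}, we see that $(\xi^{(k)})'$ and $(L^{(k)})'$ are uniformly bounded on $[-a,a]$ (the $\lambda/M^2(t^{(k)})$ terms vanish in the limit and are certainly bounded), so $\{\xi^{(k)}\}$ and $\{L^{(k)}\}$ are uniformly bounded and uniformly Lipschitz, hence equicontinuous. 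Similarly part (ii) of Lemma \ref{NRIB} gives that $\tfrac{d}{dt}\bigl(\tfrac{r(y^{(k)}(t))-\lambda I}{M^2(t^{(k)})}\bigr)$ is uniformly bounded, so this family is equicontinuous too. By the Arzelà–Ascoli theorem, I can extract a subsequence along which $L^{(k)} \to \tilde L$, $\xi^{(k)} \to \tilde\xi$, and $\tfrac{r(y^{(k)})-\lambda I}{M^2(t^{(k)})} \to \tilde r$ in $C^0[-a,a]$.

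\textbf{Passing to the limit in the equations.} Having the $C^0$ convergence, I would integrate the first two equations of \eqref{EFRF}. Writing, say, $\xi^{(k)}(t) = \xi^{(k)}(0) + \int_0^t \bigl[-tr((L^{(k)})^2) - m(tr(L^{(k)}) - \xi^{(k)})^2 - \tfrac{\lambda}{M^2(t^{(k)})}\bigr]\,ds$, the integrand converges uniformly on $[-a,a]$ (products and squares of uniformly convergent bounded sequences converge uniformly, and $\lambda/M^2(t^{(k)}) \to 0$ since $M(t^{(k)}) \to \infty$), and $\xi^{(k)}(0)$ converges to $\tilde\xi(0)$; so the integral identity passes to the limit and gives $\tilde\xi(t) = \tilde\xi(0) + \int_0^t [-tr(\tilde L^2) - m(tr(\tilde L) - \tilde\xi)^2]\,ds$. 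Differentiating, $\tilde\xi$ is $C^1$ and satisfies the first equation of \eqref{LEQE}. The same integration applied to $(L^{(k)})' = -\xi^{(k)}L^{(k)} + \tfrac{r(y^{(k)})-\lambda I}{M^2(t^{(k)})}$ yields $\tilde L(t) = \tilde L(0) + \int_0^t [-\tilde\xi \tilde L + \tilde r]\,ds$, so $\tilde L$ is $C^1$ and satisfies the second equation of \eqref{LEQE}. This establishes \eqref{LEQE} on $[-a,a]$.

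\textbf{Remarks on the obstacle and a diagonal refinement.} I do not expect a serious obstacle here; the work has essentially been done in Lemma \ref{NRIB}, and the corollary is a routine Arzelà–Ascoli-plus-integration argument. The one point requiring a little care is that the third equation $(y^{(k)})' = L^{(k)}$ is \emph{not} claimed to pass to a limit — and indeed $y^{(k)}$ itself need not converge, since $y(t^{(k)})$ may run off to infinity. This is why the corollary only asserts convergence of $L^{(k)}$, $\xi^{(k)}$ and the combination $\tfrac{r(y^{(k)})-\lambda I}{M^2(t^{(k)})}$ (which stays bounded even though $y^{(k)}$ does not), and why the limiting system \eqref{LEQE} treats $\tilde r$ as an independent function rather than as $r$ evaluated at some $\tilde y$. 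If one wants the conclusion for \emph{every} $a > 0$ simultaneously rather than a fixed one — which is what the later argument presumably needs — one runs the above on $[-j, j]$ for each $j \in \mathbb{N}$ and takes a diagonal subsequence, obtaining limits $\tilde L, \tilde\xi, \tilde r$ defined on all of $\mathbb{R}$ and solving \eqref{LEQE} there; the statement as written fixes $a$, so the single-interval version suffices.
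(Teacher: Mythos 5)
Your argument is correct and follows the same route as the paper's proof: use Lemma \ref{NRIB} for $C^0$ bounds, feed them through \eqref{EFRF} (and part (ii) of Lemma \ref{NRIB}) for the $C^1$ bounds, apply Arzel\`a--Ascoli, and pass to the limit in the integral form of the equations. Your closing remarks on the non-convergence of $y^{(k)}$ and on the diagonal refinement are accurate but supplementary.
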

\begin{proof}
Lemma \ref{NRIB} implies that $L^{(k)}$, $\xi^{(k)}$ and $\frac{r(y^{(k)})-\lambda I}{M^2(t^{(k)})}$ are bounded in $C^0[-a,a]$. 
Equation \eqref{EFRF} then implies that $L^{(k)}$ and $\xi^{(k)}$ are also bounded in $C^1[-a,a]$, and we already know from Lemma \ref{NRIB} 
that $\frac{r(y^{(k)})-\lambda I}{M^2(t^{(k)})}$ is bounded in $C^1[-a,a]$ as well. 
The Arzela-Ascoli Theorem implies, by taking another subsequence if necessary,
that $L^{(k)}\to \tilde{L}$, $\xi^{(k)}\to \tilde{\xi}$ and $\frac{r(y^{(k)})-\lambda I}{M^2(t^{(k)})}\to \tilde{r}$ in $C^0[-a,a]$. By taking limits of 
the integral form of \eqref{EFRF}, we find that $\tilde{L}$, $\tilde{\xi}$ and $\tilde{r}$ solve \eqref{LEQE}. 
\end{proof}

\begin{proof}[Proof of Theorem \ref{MTFS}]
If $\sup_{t>0}M(t)t=\infty$, construct the rescaled functions in \eqref{DRSF}. 
We claim that $\liminf _{k\to \infty} \left|\xi^{(k)}(-1)\right|+\left|\xi^{(k)}(1)\right|=:2K>0$. To see this, we argue by contradiction. 
If $K=0$, Corollary \ref{AALA} implies the existence of a solution to \eqref{LEQE} on $[-1,1]$ with $\tilde{\xi}(-1)=\tilde{\xi}(1)=0$. 
It follows from \eqref{LEQE} that $\tilde{\xi}$ is monotone decreasing, so in fact $\tilde{\xi}=0$ on $[-1,1]$. We also find from \eqref{LEQE} that 
$\tilde{L}=\tilde{r}=0$ on $[-1,1]$. 
In particular, 
\begin{align*}
0&=tr(\tilde{L}(0)^2)+\tilde{\xi}(0)^2+\left|\tilde{r}(0)\right|\\
&=\lim_{k\to \infty} \frac{tr(L(t^{(k)})^2)+\xi(t^{(k)})^2+\left|r(y(t^{(k)}))\right|}{tr(L(t^{(k)})^2)+\xi(t^{(k)})^2+R(y(t^{(k)}))}.
\end{align*} We arrive at a contradiction with (iii) 
of Lemma \ref{DOR}. 

Now for an arbitrary $a>1$, Corollary \ref{AALA} again implies the existence of a solution of \eqref{LEQE} on $[-a,a]$, and we now know that 
$\left|\tilde{\xi}(-1)\right|\ge K$ or $\left|\tilde{\xi}(1)\right|\ge K$. 
The monotonicity of $\tilde{\xi}$ implies that $\tilde{\xi}(-1)\ge K$ or $\tilde{\xi}(1)\le -K$. Now since $m>0$,
$$\inf_{\tilde{\xi}\neq 0,\tilde{L}}\frac{tr(\tilde{L}^2)+m(tr(\tilde{L})-\tilde{\xi})^2}{\tilde{\xi}^2}=:q>0 .$$ Therefore, if $\tilde{\xi}(1)<-K$, then
 $$\tilde{\xi}(t)'\le -q \tilde{\xi}(t)^2\le -q K^2$$ for $t\ge 1$. 
For large $a$, we then find that $\tilde{\xi}(a)^2>(c_1+2)$. 
This is a contradiction with Lemma \ref{NRIB}. We arrive at a similar contradiction if $\tilde{\xi}(-1)\ge K$. 
\end{proof}

\section{The Dirichlet Problem}
In this section, we prove Theorem \ref{DCT}. Recall from Section 2.4 that it suffices to find a one-parameter family of solutions to 
\eqref{QEECOS} subject to \eqref{QEEWP}. Consider the problem of solving \eqref{QEECOS} subject to 
\begin{align}\label{QEEWP1}
\begin{split}
 y_i(0)=a_i, &\qquad y_i(1)=a_i+p(b_i-a_i),\\
 \int_{0}^{1}\xi &=pc,
 \end{split}
\end{align}
for some $p\in [0,1]$. By multiplying \eqref{QEECOS} by a Green's function, integrating and using~\eqref{QEEWP1}, we see that finding a solution $(y,\xi)$ of 
\eqref{QEECOS} and 
\eqref{QEEWP1} is equivalent to solving $(y,\xi)=H(p,h^2,y,\xi)$, 
where $$H:[0,1]\times \mathbb{R}^+\times C^1([0,1]; \mathbb{R}^n)\times C^0([0,1]; \mathbb{R})\to  C^1([0,1]; \mathbb{R}^n)\times C^0([0,1]; \mathbb{R})$$
is completely continuous (cf. Section 4 of \cite{Buttsworth18}). If $p=1$, \eqref{QEEWP1} coincides with \eqref{QEEWP}, so to prove Theorem \ref{DCT}, it suffices to 
find a solution of $H(1,h^2,y,\xi)=(y,\xi)$ for some small values of $h^2>0$. We do this by analysing the $h=0$ case, and then using Schauder degree theory to recover information about the $h^2>0$ case. 
\begin{remark}
Geometrically, setting $h^2=0$ has the effect of collapsing the length of the interval $[0,1]$ to $0$. If $h^2=0$, the equations of \eqref{QEEWP} coincide with the equation for steady ($\lambda=0$) 
quasi-Einstein metrics 
in the special case that the homogeneous space $G/H$ is the torus. 
\end{remark}
Using the diagonal matrix $L=y'$, we see that solving $H(p,0,y,\xi)=(y,\xi)$ is equivalent to 
\begin{align}\label{RSET}
\begin{split}
\xi'&=-tr(L^2)-m(tr(L)-\xi)^2,\\
L'&=-\xi L,
\end{split}
\end{align}
subject to 
\begin{align}\label{DCRST}
\begin{split}
\int_{0}^{1}L&=pD,\\
\int_{0}^{1}\xi&=p c,
\end{split}
\end{align}
with $D$ a diagonal matrix given by
\begin{align*}
D=diag(\underbrace{a_1-b_1,\cdots,a_1-b_1}_{d_1~\mathrm{times}},\cdots,\underbrace{a_n-b_n,\cdots,a_n-b_n}_{d_n~\mathrm{times}}).
\end{align*} The following lemma is a compactness result for solutions of \eqref{RSET} and \eqref{DCRST}, 
in that it demonstrates that a sequence of solutions 
has a convergent subsequence in a certain weak sense. 
\begin{lemma}\label{CBUS}
 Suppose that there is a sequence of solutions $(L^{(k)},\xi^{(k)},p^{(k)})$ to \eqref{RSET} and \eqref{DCRST} on $[0,1]$ with $p^{(k)}\in [0,1]$. 
 Then there exists a solution $(\bar{L},\bar{\xi})$ to \eqref{RSET} defined on 
 $(0,1)$ such that for each $0<a<\frac{1}{2}<b<1$, there exists a subsequence of $(L^{(k)},\xi^{(k)})$ converging to $(\bar{L},\bar{\xi})$ in $C^0[a,b]$. 
\end{lemma}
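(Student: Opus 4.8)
### Proof Proposal for Lemma \ref{CBUS}

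\textbf{Overview of the strategy.} The plan is to extract a limit by first controlling the solutions on compact subintervals of $(0,1)$, then using the standard ODE-compactness machinery (Arzel\`a--Ascoli) to pass to a limit. The key structural fact in \eqref{RSET} is that $\xi$ is monotone decreasing (since $\xi'=-tr(L^2)-m(tr(L)-\xi)^2\le 0$), and that the constraints \eqref{DCRST} force the \emph{averages} of $L^{(k)}$ and $\xi^{(k)}$ over $[0,1]$ to be uniformly bounded (as $p^{(k)}\in[0,1]$). The main obstacle is that solutions may blow up as $t\to 0^+$ or $t\to 1^-$ — indeed this is expected, since these boundary degenerations are exactly the singular behaviour analysed in Section 3 — so one only gets convergence on $[a,b]\Subset(0,1)$, not on all of $[0,1]$.

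\textbf{Step 1: Interior bounds on $\xi^{(k)}$.} Since $\xi^{(k)}$ is monotone decreasing and $\int_0^1\xi^{(k)}=p^{(k)}c$ with $|p^{(k)}c|\le |c|$, a mean-value/monotonicity argument gives, for any fixed $0<a<\tfrac12<b<1$, a bound $|\xi^{(k)}(t)|\le C(a,b)$ for $t\in[a,b]$: monotonicity means $\xi^{(k)}(b)\le \xi^{(k)}(t)\le \xi^{(k)}(a)$ on $[a,b]$, while $\xi^{(k)}(a)\le \frac{1}{a}\int_0^a \xi^{(k)}$ cannot be too large (because $\int_a^1\xi^{(k)}\ge (1-a)\xi^{(k)}(1-)$ needs care — better: use that $\int_0^{1/2}\xi^{(k)}\le \tfrac12\xi^{(k)}(0+)$ is not available, so instead bound $\xi^{(k)}(a)$ from above by noting $\int_a^b \xi^{(k)} \le \int_0^1 \xi^{(k)} + \int_0^a|\xi^{(k)}|+\int_b^1|\xi^{(k)}|$ is not immediate either). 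The cleanest route: monotonicity plus the integral constraint gives $\xi^{(k)}(3/4)\le \frac{1}{1/4}\int_{1/2}^{3/4}\xi^{(k)}\le 4(\int_0^1\xi^{(k)} - \int_0^{1/2}\xi^{(k)} - \int_{3/4}^1\xi^{(k)})$, and since $\xi^{(k)}$ is decreasing, $\int_0^{1/2}\xi^{(k)}\ge \tfrac12\xi^{(k)}(1/2)\ge \tfrac12\xi^{(k)}(3/4)$, etc.; combining these inequalities pins $\xi^{(k)}(3/4)$ above and, symmetrically, $\xi^{(k)}(1/4)$ below, and hence by monotonicity $|\xi^{(k)}|$ is bounded on any $[a,b]\subset(0,1)$.

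\textbf{Step 2: Interior bounds on $L^{(k)}$.} From $L'=-\xi L$ we get $L^{(k)}(t)=L^{(k)}(t_0)\exp(-\int_{t_0}^t\xi^{(k)})$, so once $\xi^{(k)}$ is bounded on $[a,b]$, the operator norm $|L^{(k)}(t)|$ is comparable to $|L^{(k)}(t_0)|$ up to a uniform factor $e^{\pm C(b-a)}$. To bound $|L^{(k)}|$ at one point, use the constraint $\int_0^1 L^{(k)}=p^{(k)}D$: the entries of $L^{(k)}$ are $(y_i^{(k)})'$, all of one sign behaviour controlled by $-\xi^{(k)}$, so each $|L_i^{(k)}|$ on $[a,b]$ is bounded via the same monotone-average argument applied to the scalar ODE $(L_i^{(k)})'=-\xi^{(k)}L_i^{(k)}$ together with $\int_0^1 L_i^{(k)}=p^{(k)}(a_i-b_i)$. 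Concretely, $\log|L_i^{(k)}|$ has derivative $-\xi^{(k)}$ which is monotone, so $\log|L_i^{(k)}|$ is convex, hence its $\sup$ on $[a,b]$ is controlled by its boundary values, which in turn are controlled by the integral constraint. This yields a uniform bound $|L^{(k)}(t)|\le C(a,b)$ on $[a,b]$.

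\textbf{Step 3: Passage to the limit.} With $\xi^{(k)}$ and $L^{(k)}$ uniformly bounded in $C^0[a,b]$, equation \eqref{RSET} shows they are uniformly bounded in $C^1[a,b]$, so Arzel\`a--Ascoli yields a subsequence converging in $C^0[a,b]$ to some $(\bar L,\bar\xi)$ solving \eqref{RSET} on $[a,b]$ (take limits in the integral form). A diagonal argument over an exhausting sequence $[a_j,b_j]\uparrow(0,1)$ produces a single pair $(\bar L,\bar\xi)$ defined and solving \eqref{RSET} on all of $(0,1)$, with the stated subsequential convergence on each $[a,b]$. The only subtlety to flag is ensuring the diagonal subsequence is genuinely nested so that convergence on $[a,b]$ holds for the \emph{final} sequence; this is routine. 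The main difficulty, as noted, is Step 1--2: squeezing pointwise bounds out of averaged (integral) constraints, which works precisely because monotonicity of $\xi$ and log-convexity of $|L_i|$ turn interior averages into interior pointwise control.
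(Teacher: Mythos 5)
Your high-level strategy (interior $C^0$ bounds on $[a,b]\Subset(0,1)$, then Arzel\`a--Ascoli plus a limit-stabilising device) is the same as the paper's. Your observation in Step~2 that $\log|L_i^{(k)}|$ has derivative $-\xi^{(k)}$, hence is convex, so that the $L$-integral constraint pins down $|L_i^{(k)}|$ on $[a,b]$, is correct and self-contained (once you note that the max of a convex nonnegative function on $[a,b]$ is at an endpoint, and a large endpoint value forces a large integral over $[0,a]$ or $[b,1]$); this is essentially the paper's $L_i^2$-has-a-unique-minimum argument in different clothing. Your diagonal argument in Step~3 is also fine and is a workable substitute for the paper's trick of first passing to a subsequence making $L_i^{(k)}(\tfrac12)$ and $\xi^{(k)}(\tfrac12)$ monotone.

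The genuine gap is in Step~1. You claim that monotonicity of $\xi^{(k)}$ together with $|\int_0^1\xi^{(k)}|\le|c|$ produces a $C^0[a,b]$ bound on $\xi^{(k)}$. This is false: a decreasing function on $[0,1]$ with bounded (even zero) integral can be unbounded on every compact subinterval of $(0,1)$. The linear ramp $\xi_k(t)=k(\tfrac12-t)$ is decreasing, has $\int_0^1\xi_k=0$, yet $\xi_k(a)\to+\infty$ and $\xi_k(b)\to-\infty$ for any fixed $a<\tfrac12<b$. Your own chain of inequalities already flags the problem: the term $\int_{3/4}^1\xi^{(k)}$ has no lower bound in terms of $\xi^{(k)}(3/4)$ because $\xi^{(k)}(1)$ is a priori uncontrolled, so the rearrangement does not close. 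Bounding $\xi$ on $[a,b]$ genuinely requires the coupling with $L$ and the dynamics, not just integral constraints: the paper first bounds $L$ on $[a,b]$, then argues that if $\xi(a)\to\infty$ or $\xi(b)\to-\infty$, the exponential growth of $L_i^2$ outside $[a,b]$ (from $(L_i^2)'=-2\xi L_i^2$) forces $L_i(a)\to 0$ or $L_i(b)\to 0$ to respect $\int_0^1 L_i=p^{(k)}D_i$; then it splits into cases: if $\xi$ changes sign, the trajectory passes arbitrarily close to the rest point $(0,0)$ of the autonomous system \eqref{RSET}, forcing the whole solution to be small (contradicting unboundedness), and if $\xi$ does not change sign, its monotone blow-up at $a$ or $b$ contradicts $\int_0^1\xi=p^{(k)}c$. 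Without this $\xi$--$L$ interplay and the critical-point observation, the interior bound on $\xi$ does not follow, and the compactness argument collapses.
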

\begin{proof}
By taking a first subsequence of $(L^{(k)},\xi^{(k)})$, we can assume that for each $i$, $L_{i}^{(k)}(\frac{1}{2})$ 
and $\xi^{(k)}(\frac{1}{2})$ are monotone in $k$. 
 Fix $0<a<\frac{1}{2}<b<1$. 
 The proof relies on showing that $(L^{(k)},\xi^{(k)})$ is bounded in $C^0[a,b]$ independently of $k\in \mathbb{N}$. If we can do this, since $(L^{(k)},\xi^{(k)})$ 
 solves \eqref{RSET}, we also find that $(L^{(k)},\xi^{(k)})$ is bounded in $C^2[a,b]$. Therefore, the Arzela-Ascoli 
 Theorem implies that there is a subsequence of $(L^{(k)},\xi^{(k)})$ converging to $(\bar{L},\bar{\xi})$ in $C^1[a,b]$, so $(\bar{L},\bar{\xi})$ 
 is a solution of \eqref{RSET}. 
Since $L_{i}^{(k)}(\frac{1}{2})$ and $\xi^{(k)}(\frac{1}{2})$ are monotone in $k$, 
$\bar{L}(\frac{1}{2})$ and $\bar{\xi}(\frac{1}{2})$ are independent of $a$ and $b$, and by uniqueness of solutions to ODEs, $\bar{L}$ and $\bar{\xi}$
are themselves independent of $a$ and $b$, and can be extended to a solution on $(0,1)$ as required. 
We now drop reference to the superscript $k$ to simplify notation. 
 
 To find the required bounds on $(L,\xi)$, first suppose that there exists an $i$ such that $\sup_{k}\left|\left|L_{i}\right|\right|_{C^0[a,b]}=\infty$. 
Note that the second equation of \eqref{RSET} implies that 
\begin{align}\label{EFL2}
 (L_i^2)'=-2L_i^2\xi,
\end{align}
from which we can see that 
$L_{i}$ does not change sign on $[0,1]$. Also, from the first equation of \eqref{RSET}, we see that $\xi'\le 0$, 
so if $L_i$ is non-zero, it can have at most one critical point, which minimises $L_{i}^2$ by \eqref{EFL2}. 
Therefore, if $L_{i}$ is unbounded on $[a,b]$, then $L_{i}^2(a)$ is unbounded and $L_{i}^2\ge L_{i}^2(a)$ on $[0,a]$, or  
$L_{i}^2(b)$ is unbounded and $L_{i}^2\ge L_{i}^2(b)$ on $[b,1]$. In either case, we will eventually get a contradiction with \eqref{DCRST}. 

Now suppose that $\xi$ is unbounded. Since $\xi'\le 0$, 
we can find a subsequence such that $\xi(a)\to \infty$ or $\xi(b)\to -\infty$ monotonically in $k$. 
If $\xi(b)\to -\infty$, then since $\xi\le \xi(b)$ on $[b,1]$, 
$(L_{i}^2)'\ge 2L_{i}^2(- \xi(b))$ on $[b,1]$, and in order for \eqref{DCRST} to be satisfied, $L_{i}(b)$ must converge to $0$ for each $i$. 
Similarly, if $\xi(a)\to \infty$, then $L_{i}(a)\to 0$ 
for each $i$. In each case, we claim that far enough along the sequence, $\xi$ does not change sign on $[0,1]$. 
If it did, $L_{i}^2$ would be minimised where $\xi$ is $0$, so at this point, $L_{i}^2\le \min \{L_{i}(a)^2,L_{i}(b)^2\}\to 0$. 
We therefore have points $t^{(k)}\in [0,1]$ such that $(L^{(k)},\xi^{(k)})(t^{(k)})$ is getting arbitrarily close to the $(0,0)$ critical point of \eqref{RSET}, which contradicts the assumption that $(L^{(k)},\xi^{(k)})$ is unbounded on $[0,1]$.  
Therefore, $\xi$ does not change sign 
far enough along the sequence. However, since $\xi$ does not change sign, $\xi(t)$ is monotone decreasing in $t$ and we know that 
$\xi(a)\to \infty$ or $\xi(b)\to -\infty$, we have a contradiction 
with \eqref{DCRST}. 
\end{proof}
We now use Lemma \ref{CBUS} to demonstrate that if we have a sequence of solutions to \eqref{RSET} and \eqref{DCRST}, then these solutions are bounded. 
\begin{lemma}\label{APEDC}
Any solution of \eqref{RSET} and \eqref{DCRST} is bounded in the $C^0$ sense, independently of $p\in [0,1]$. 
\end{lemma}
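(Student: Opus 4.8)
The plan is to argue by contradiction: suppose there is a sequence of solutions $(L^{(k)},\xi^{(k)},p^{(k)})$ of \eqref{RSET} and \eqref{DCRST} that is unbounded in $C^0[0,1]$. First I would apply Lemma \ref{CBUS} to extract a limiting solution $(\bar L,\bar\xi)$ of \eqref{RSET} on the open interval $(0,1)$, with convergence in $C^0[a,b]$ for every $0<a<\tfrac12<b<1$ along a subsequence. The strategy is then to show that this limiting solution must blow up as $t\to 0^+$ or as $t\to 1^-$ in a controlled way, and to extract a contradiction with the integral constraints \eqref{DCRST}, exactly as in the proof of Lemma \ref{CBUS} but now applied to the limit object rather than the sequence. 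The unboundedness of the sequence forces $(\bar L,\bar\xi)$ to be genuinely singular at an endpoint, since a solution that extended continuously to $[0,1]$ would, by uniqueness of ODEs and the monotone choice of $(L^{(k)}(\tfrac12),\xi^{(k)}(\tfrac12))$ in Lemma \ref{CBUS}, force the sequence to be bounded near that endpoint.

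The key structural facts I would use are the same ones exploited in Lemma \ref{CBUS}: from \eqref{RSET} we get $(\bar L_i^2)'=-2\bar L_i^2\bar\xi$, so each $\bar L_i$ has constant sign and at most one critical point (a minimum of $\bar L_i^2$); and $\bar\xi'=-tr(\bar L^2)-m(tr(\bar L)-\bar\xi)^2\le 0$, so $\bar\xi$ is monotone decreasing. Near an endpoint where the solution is singular, either some $\bar L_i$ blows up — in which case $\bar L_i^2$ is bounded below by a positive constant on a one-sided neighbourhood of that endpoint and in fact blows up, so $\int_0^1|\bar L_i|=\infty$, contradicting $\int_0^1 \bar L = pD$ after passing to the limit of \eqref{DCRST} — or $\bar\xi$ blows up, say $\bar\xi\to+\infty$ as $t\to 0^+$ or $\bar\xi\to-\infty$ as $t\to 1^-$, in which case $\int_0^1\bar\xi$ diverges, contradicting $\int_0^1\bar\xi = pc$. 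The monotonicity and constant-sign facts are what rule out the intermediate scenario where blow-up in $\bar\xi$ is somehow compensated; the argument that $\bar\xi$ cannot change sign while staying bounded, which appears in Lemma \ref{CBUS} via the $(0,0)$ critical point of \eqref{RSET}, carries over to the limit.

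The main obstacle I anticipate is making rigorous the passage from ``the sequence $(L^{(k)},\xi^{(k)})$ is unbounded in $C^0[0,1]$'' to ``the limit $(\bar L,\bar\xi)$ is singular at an endpoint,'' since Lemma \ref{CBUS} only gives convergence on compact subintervals of $(0,1)$; a priori the unboundedness of the sequence could be located right at an endpoint while the limit $(\bar L,\bar\xi)$ remains tame on $(0,1)$. To handle this I would argue that if $(\bar L,\bar\xi)$ extended to a bounded continuous solution on all of $[0,1]$, then near each endpoint the $C^0[a,b]$ bounds on the subsequence, combined with the ODE \eqref{RSET} and the constant-sign/monotonicity structure, would propagate to uniform bounds up to the endpoint (using that $\bar L_i^2$ is monotone on each side of its unique critical point, and $\bar\xi$ monotone); this would make the whole sequence bounded on $[0,1]$, the desired contradiction. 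Once the limit is known to be singular at an endpoint, the divergence of $\int_0^1\bar L$ or $\int_0^1\bar\xi$ against \eqref{DCRST} closes the argument; one should also verify that the limits of $p^{(k)}D$ and $p^{(k)}c$ lie in a compact set so that the right-hand sides of \eqref{DCRST} stay bounded, which is immediate since $p^{(k)}\in[0,1]$.
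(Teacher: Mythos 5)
Your high-level plan matches the paper's: extract the limit $(\bar L,\bar\xi)$ via Lemma \ref{CBUS}, argue it must be singular at an endpoint, and derive a contradiction with \eqref{DCRST}. The structural observations (constant sign of $\bar L_i$, monotonicity of $\bar\xi$) are the right tools. However, there are two places where you wave at a conclusion that does not actually follow from what you've set up, and both are places where the paper does substantial extra work.

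First, your dichotomy ``either some $\bar L_i$ blows up, or $\bar\xi$ blows up, and either way an integral diverges against \eqref{DCRST}'' breaks down in the case $\bar L\equiv 0$ with $\bar\xi$ singular. The integral constraints \eqref{DCRST} hold for the \emph{sequence}, not for $(\bar L,\bar\xi)$, and $\xi^{(k)}$ may change sign on $[0,1]$. So even if $\int_a^{1/2}\bar\xi\to\infty$ as $a\to 0$, the contribution of $\xi^{(k)}$ near $t=1$ can be arbitrarily negative and cancel it; you cannot conclude $\int_0^1\xi^{(k)}$ is large. Your remark that ``the $(0,0)$ critical point argument carries over'' points at the right idea but applies it to the wrong question (whether $\bar\xi$ changes sign), and that is not where the difficulty lies. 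What the paper actually does in this case (its Case 1a) is exploit the assumption that $\xi^{(k)}$ changes sign to produce a point $t^{(k)}$ with $\xi^{(k)}(t^{(k)})=0$ and $tr((L^{(k)})^2)$ minimized there; uniform convergence $L^{(k)}\to 0$ on $[a,b]$ then forces $M^{(k)}(t^{(k)})\to 0$, and the Riccati-type bound $|M'|\le sM^2$ propagates smallness over all of $[0,1]$, contradicting unboundedness of the sequence directly. (The paper also disposes of the case where $\xi^{(k)}$ never changes sign as a separate preliminary, using the bound $\int_0^1 M\le |c|+\sum_j|D_j|$ together with $|M'|\le sM^2$; your argument does not obviously subsume this and it is worth separating out.)

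Second, even in the case where some $\bar L_i$ is non-zero, the step ``$\bar L_i$ blows up at the endpoint, hence $\int_0^1|\bar L_i|=\infty$'' is unjustified: a function can blow up and still be integrable ($t^{-1/2}$, say). You need the quantitative lower bound $\bar M(t)\ge\frac{1}{st}$, which follows from integrating $|\bar M'|\le s\bar M^2$ together with the blow-up of $\bar M$ at $0$. Combined with the sign information $\bar\xi>0$ near the singular endpoint (which also needs to be established, as in the paper's inequality \eqref{EFSTR}), this yields the differential inequality $\bar L_i'\le -\bar L_i(\tfrac{1}{st}-d\bar L_i)$; comparison with the explicit solution of $x'=-x(\tfrac{1}{st}-dx)$ is what gives $\int_0^{1/2}\bar L_i=\infty$. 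Only then can you push the divergence back to $\int_a^b L_i^{(k)}$ along the subsequence and contradict \eqref{DCRST}. Without these intermediate estimates your conclusion does not follow.
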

\begin{proof}
Assume to the contrary that no such bound exists. 
Then there exists a sequence of solutions $(L^{(k)},\xi^{(k)},p^{(k)})$ which are unbounded in the $C^0$ sense, and we can assume that the 
$C^0$ norm of $(L^{(k)},\xi^{(k)})$ 
is monotone increasing. Similarly to the proof of Lemma \ref{CBUS}, we now drop reference to the superscript where convenient to do so. 

In order to keep track of the blow-up, we will let 
$M=\sqrt{tr(L^2)+\xi^2}$, and note that there exists $s>0$ such that \begin{equation}\label{EFR}
                                                                        \left|M'\right|\le sM^2. 
                                                                       \end{equation}                                                                    
Now take a subsequence such that either $\xi$ does not change sign for each element of the sequence, or $\xi$ does change sign for each element of the sequence. 
If $\xi$ does not change sign, then $\int_{0}^{1}M\le \left|c\right|+\sum_{i=1}^{d}\left|D_{i}\right|$. However, since $M$ is unbounded and 
satisfies \eqref{EFR}, we find that $\int_{0}^{1}M$ is also unbounded, which is a contradiction. 

We can now assume that $\xi$ changes sign for each element of our sequence.                                                                   
Let $(\bar{L},\bar{\xi})$ be the solution found by Lemma \ref{CBUS}. We claim that this solution is unbounded. Indeed, if there were 
some $R'$ which bounds $(\bar{L},\bar{\xi})$, choose $a$ and 
$b$ close to $0$ and $1$ respectively so that a solution of \eqref{RSET} bounded by 
$R'+1$ on $[a,b]$ stays bounded by $R'+2$ on $[0,1]$. Now by Lemma \ref{CBUS}, we have a subsequence which is 
convergent 
to $(\bar{L},\bar{\xi})$ in the $C^0[a,b]$ sense. 
Therefore, far enough along the subsequence, $(L,
\xi)$ will be bounded by $R'+1$ on $[a,b]$, and so must also be bounded by $R'+2$ on $[0,1]$. 
This contradicts the assumption 
that our sequence is unbounded in $C^0$. 

We now know that $(\bar{L},\bar{\xi})$ is unbounded. The remainder of the proof involves showing that the nature of the unboundedness implies that far enough 
along our sequence, $L^{(k)}$ and $\xi^{(k)}$ cannot possibly satisfy \eqref{DCRST}. 

$\textbf{Case}$ $\textbf{1}$: our solution $(\bar{L},\bar{\xi})$ is unbounded around $0$. 
Letting $\bar{M}^2=\bar{\xi}^2+tr(\bar{L}^2)$, we have again $\bar{M}'\ge -s\bar{M}^2$. 
Since $\bar{\xi}$ is monotone decreasing and $\bar{L}_i$ is monotone on any interval where $\bar{\xi}$ does not change sign, we see that
$\bar{M}$ blows up at $0$, so we must have $\bar{M}(t)\ge \frac{1}{st}$. 
Also note that for small $t$, $\bar{\xi}$ has to become positive, otherwise neither $\bar{\xi}$ nor $\bar{L}_i$ can become unbounded around $0$. 
Therefore, for small $t$, 
\begin{equation}\label{EFSTR}
 \bar{M}\le \sum_{i=1}^{d}\left|\bar{L}_{i}\right|+\bar{\xi}.
\end{equation}
Choose $i$ such that $\left|\bar{L}_{i}(t)\right|$ is maximised for all $t\in [0,1]$. 
Such an $i$ must exist 
and is independent of $t\in [0,1]$ because of the second equation of \eqref{RSET}.

$\textbf{Case}$ $\textbf{1}\textbf{a}$: $\left|\bar{L}_{i}\right|=0$, so $\bar{L}=0$ on $[0,1]$. 
Since $\xi$ changes sign, $tr(L^2)$ is minimised at the point where $\xi=0$. 
Now fix some $0<a<\frac{1}{2}<b<1$, and take a subsequence of $(L,\xi)$ that converges uniformly to 
$(0,\bar{\xi})$ on $[a,b]$. Since $L$ converges uniformly to $0$ on $[a,b]$, the minimum value of 
$tr(L^2)$ on $[0,1]$ must also converge to $0$. Then on this subsequence, we have points $t^{(k)}$ 
such that $\xi^{(k)}(t^{(k)})=0$ and $tr((L^{(k)})^2(t^{(k)}))\to 0$, so $M^{(k)}(t^{(k)})\to 0$. The inequality \eqref{EFR} then implies that $M^{(k)}\to 0$ uniformly on $[0,1]$, 
which is a contradiction because $M$ is unbounded.

$\textbf{Case}$ $\textbf{1}\textbf{b}$: $\left|\bar{L}_{i}\right|=\bar{L}_{i}>0$. In this case, \eqref{EFSTR} implies that $\bar{L}_{i}'=-\bar{\xi} \bar{L}_{i}\le-\bar{L}_i(\frac{1}{st}-d\bar{L}_{i})$. 
 The solution of the differential equation 
$x'=-x(\frac{1}{st}-dx)$ is $x(t)=\frac{s}{t(C-sd \ln(\frac{t}{s}))}$ for some $C\in \mathbb{R}$. If we choose $C$ so that $x(\frac{1}{2})=\bar{L}_{i}(\frac{1}{2})$, 
then for all 
$t\le \frac{1}{2}$, $\bar{L}_{i}(t)\ge x(t)$. Therefore $\int_{0}^{1}\bar{L}_i\ge \int_{0}^{\frac{1}{2}}\bar{L}_i =\infty$, which implies that $\int_{a}^{b}\bar{L}_i$ becomes arbitrarily large
by having $a$ and $b$ close to $0$ and $1$ respectively. 
However, on $[a,b]$, we have a subsequence of $(L,\xi)$ converging to 
$(\bar{L},\bar{\xi})$ in $C^0[a,b]$, and by taking another subsequence, we can assume that $L_i>0$ for each element of the sequence. 
We find that $$pD_{i}=\int_{0}^{1}L_{i}\ge \int_{a}^{b}L_{i}\to \int_{a}^{b}\bar{L}_{i},$$ which is a contradiction. 

$\textbf{Case}$ $\textbf{1}\textbf{c}$: $\left|\bar{L}_{i}\right|=-\bar{L}_{i}>0$. 
This time, \eqref{EFSTR} implies that $\bar{L}_{i}'\ge-\bar{L}_i(\frac{1}{st}+d\bar{L}_{i})$. Letting $L^*=-\bar{L}_i$, we find that $L^*>0$ and $(L^*)'\le -L^*(\frac{1}{st}-dL^*)$. This is the same inequality as in Case 1 b, so we proceed as before to find a contradiction.  

$\textbf{Case}$ $\textbf{2}$: $(\bar{L},\bar{\xi})$ is unbounded at $1$. We do not need to treat this case because the transformation 
$L\to -L$, $\xi\to -\xi$ and $t\to 1-t$ leaves \eqref{RSET} invariant, but changes the signs of $c$ and $D_{i}$, 
and shifts the location of the unboundedness from $1$ to $0$. 
The situation is then treatable by Case 1. 
\end{proof}
We now turn to techniques in Schauder degree theory. This theory concerns solutions of the equation $x-k(x)=z$, where $x$ is the unknown in a Banach space $X$, 
$z\in X$ is fixed, and $k:X\to X$ is a completely continuous map. If we focus on a bounded convex open subset $\Omega\subset X$, the Schauder degree is 
an integer $deg(I-k,\Omega,z)$ describing solutions of $x-k(x)=z$ for $x\in \Omega$, and can be defined if $x-k(x)\neq z$ for all $x\in \partial \Omega$. The following results are standard in the field of Schauder degree theory. 
These results can be found in \cite{ASDT}, for instance. 
\begin{theorem}\label{SDTT} 
The Schauder degree has the following properties: 

(i) If $deg(I-k,\Omega,0)\neq 0$, there exists $x\in \Omega$ such that $k(x)=x$.

(ii) If $(I-k')(a)$ is an invertible linear map for each $a$ with $a-k(a)=0$, then $deg(I-k,\Omega,0)=\sum _{a\in (I-k)^{-1}(0)} (-1)^{\sigma (a)}$, 
where $\sigma(a)$ is the sum of the algebraic multiplicities of the eigenvalues of $f'(a)$ contained in $(1,\infty)$. 

(iii) If $H:[0,1]\times \bar{\Omega}\to X$ is a completely continuous map such that $H(t,x)\neq x$ for all 
$x\in \partial \Omega$ and $t\in [0,1]$, then 
$deg(I-H(t,\cdot),\Omega,0)$ is independent of $t\in [0,1]$. 
\end{theorem}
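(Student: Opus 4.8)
The plan is to deduce all three items of Theorem~\ref{SDTT} from the finite-dimensional construction of the Leray--Schauder degree, so that everything reduces to the corresponding facts for the Brouwer degree. First I would recall the construction: since $k$ is completely continuous and $\overline{\Omega}$ is bounded, $k(\overline{\Omega})$ is precompact, so for every $\varepsilon>0$ there is a finite-dimensional subspace $X_\varepsilon\subset X$ and a continuous map $k_\varepsilon\colon\overline{\Omega}\to X_\varepsilon$ with $\sup_{\overline{\Omega}}\|k_\varepsilon-k\|<\varepsilon$; for $\varepsilon<\operatorname{dist}\bigl(0,(I-k)(\partial\Omega)\bigr)$ one sets $\deg(I-k,\Omega,0):=\deg_{\mathrm{Br}}(I-k_\varepsilon,\Omega\cap X_\varepsilon,0)$ and checks this is independent of the admissible choices. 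Item (i) is then immediate: if $k(x)\neq x$ for all $x\in\overline{\Omega}$, then $(I-k)(\overline{\Omega})$ has positive distance from $0$, so choosing $\varepsilon$ below that distance forces $0\notin(I-k_\varepsilon)(\overline{\Omega\cap X_\varepsilon})$, whence $\deg_{\mathrm{Br}}(I-k_\varepsilon,\Omega\cap X_\varepsilon,0)=0$; this is the contrapositive of (i).

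For (iii), compactness of $[0,1]\times\partial\Omega$ and continuity of $(t,x)\mapsto x-H(t,x)$ give $\delta:=\inf\{\|x-H(t,x)\|: t\in[0,1],\ x\in\partial\Omega\}>0$. The family $H$ is still completely continuous, so one approximates it uniformly on $[0,1]\times\overline{\Omega}$ by a finite-dimensional family $H_\varepsilon$ with $\varepsilon<\delta$; then $I-H_\varepsilon(t,\cdot)$ is an admissible Brouwer homotopy, and homotopy invariance of $\deg_{\mathrm{Br}}$ shows $t\mapsto\deg(I-H(t,\cdot),\Omega,0)$ is constant.

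The only step with genuine content is (ii). First I would use (iii) --- an admissible homotopy shrinking $\Omega$ onto a disjoint union of small balls $B_a$ about the solutions $a$, of which there are finitely many by non-degeneracy and compactness of $(I-k)^{-1}(0)$ --- together with additivity of the degree to reduce to computing $\deg(I-k,B_a,0)$ for each $a$. On $B_a$ one has $(I-k)(x)=(I-k'(a))(x-a)+o(\|x-a\|)$, and since $I-k'(a)$ is invertible a linear interpolation (admissible once $B_a$ is small) identifies this degree with the Leray--Schauder index of the isomorphism $I-k'(a)$ at its unique zero. Now $k'(a)$ is compact, so by Riesz--Schauder theory its spectrum is countable, accumulates only at $0$, and consists of eigenvalues of finite algebraic multiplicity; write $X=E\oplus F$ where $E$ is the finite-dimensional sum of the generalized eigenspaces for real eigenvalues in $(1,\infty)$ (total multiplicity $\sigma(a)$) and $F$ the complementary closed $k'(a)$-invariant subspace. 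On $F$ the homotopy $I-t\,k'(a)|_F$, $t\in[0,1]$, is admissible because $k'(a)|_F$ has no real eigenvalue in $[1,\infty)$, so the degree there is $+1$; on $E$ the degree equals $\operatorname{sign}\det(I-k'(a)|_E)=\operatorname{sign}\prod_{\mu}(1-\mu)^{m_\mu}=(-1)^{\sigma(a)}$, each factor $1-\mu$ being negative. The product formula for the degree over $X=E\oplus F$ then gives $\deg(I-k,B_a,0)=(-1)^{\sigma(a)}$, and summing over $a$ yields (ii).

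I expect the main obstacle to be the index computation in (ii): one must relate the Leray--Schauder index of $I-k'(a)$ to the Brouwer degree of a finite-dimensional restriction and verify that the sign is governed precisely by the eigenvalues of $k'(a)$ in $(1,\infty)$, counted with algebraic multiplicity. This relies on the Riesz--Schauder spectral decomposition and the product formula for the degree. Since all of this is classical, in the paper itself I would simply invoke~\cite{ASDT} rather than reproduce the argument.
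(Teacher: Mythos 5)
Your proposal is correct, but it is worth pointing out that the paper does not actually prove Theorem~\ref{SDTT} at all: these three facts are stated as standard properties of the Leray--Schauder degree, and the paper's only ``proof'' is the citation of Rothe's monograph \cite{ASDT}. You in fact anticipate this in your final sentence when you say you would simply invoke \cite{ASDT} in the paper itself, which is precisely what the author does. So there is no proof in the paper to compare your argument against.

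That said, the argument you sketch is the standard one and it is sound. Items (i) and (iii) reduce cleanly to the finite-dimensional approximation and homotopy invariance of the Brouwer degree exactly as you describe. Item (ii) is where the real content lies, and you handle it correctly: additivity and excision reduce to computing the index at each isolated, non-degenerate zero; linearization (admissible once the ball is small because $I-k'(a)$ is an isomorphism) replaces $I-k$ by $I-k'(a)$; the Riesz--Schauder decomposition $X=E\oplus F$ isolates the generalized eigenspaces for the real eigenvalues of $k'(a)$ in $(1,\infty)$; on $F$ the linear homotopy to $I$ is admissible since $k'(a)|_F$ has no real eigenvalue in $[1,\infty)$; and on the finite-dimensional $E$ the degree is $\operatorname{sign}\det(I-k'(a)|_E)=(-1)^{\sigma(a)}$, the complex conjugate eigenvalue pairs contributing only positive factors. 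One tiny remark for completeness: the ``$f'(a)$'' appearing in the statement of (ii) is a typo for $k'(a)$, which is the map you correctly work with throughout.
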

With these results about the Schauder degree and Lemma \ref{APEDC}, we can prove the following.
\begin{lemma}\label{SD1P1}
 There exists a bounded open convex subset $\Omega\subset C^1([0,1]; \mathbb{R}^n)\times C^0([0,1];\mathbb{R})$ such that 
 $deg(I-H(1,0,\cdot),\Omega,0)=1$. 
\end{lemma}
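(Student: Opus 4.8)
The plan is to reduce to the $h^2=0$ system \eqref{RSET}--\eqref{DCRST} with $p=1$, compute its solution set explicitly, and show the linearization is invertible at the unique solution so that part (ii) of Theorem \ref{SDTT} forces the degree to be $\pm 1$; then, if needed, replace $\Omega$ by a larger ball or compose with an orientation fix to make the degree exactly $1$. Concretely, solving $H(1,0,\cdot)=(y,\xi)$ means solving \eqref{RSET} subject to $\int_0^1 L = D$, $\int_0^1 \xi = c$. The key structural fact is that along \eqref{RSET} the function $\xi$ is monotone decreasing (from the first equation, since $tr(L^2)+m(tr(L)-\xi)^2\ge 0$) and each $L_i$ satisfies $(L_i^2)'=-2L_i^2\xi$, so $L_i$ never changes sign. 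I would first argue that the $h^2=0$ problem with these integral constraints has a \emph{unique} solution: existence comes from the degree being nonzero on a suitable $\Omega$ (which is exactly what Lemma \ref{APEDC} together with the homotopy in $p$ is designed to give — the a priori bound lets one apply part (iii) of Theorem \ref{SDTT} along $p\in[0,1]$, and at $p=0$ the constraints $\int_0^1 L = 0$, $\int_0^1\xi=0$ combined with monotonicity of $\xi$ and sign-definiteness of $L_i$ force $\xi\equiv 0$, $L\equiv 0$, the unique fixed point), and uniqueness I would get from the same monotonicity/convexity analysis of the shooting map $(L(\tfrac12),\xi(\tfrac12))\mapsto (\int_0^1 L,\int_0^1\xi)$.

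The main work is then the degree computation. Take $\Omega$ to be a large open ball in $C^1\times C^0$ containing the $p=0$ fixed point $(0,0)$ and, by Lemma \ref{APEDC}, containing all fixed points of $H(p,0,\cdot)$ for every $p\in[0,1]$; enlarge it if necessary so that no fixed points lie on $\partial\Omega$. By homotopy invariance (Theorem \ref{SDTT}(iii)) applied to the completely continuous homotopy $p\mapsto H(p,0,\cdot)$, $deg(I-H(1,0,\cdot),\Omega,0)=deg(I-H(0,0,\cdot),\Omega,0)$. At $p=0$ the unique fixed point is $(L,\xi)=(0,0)$, and I would compute the linearization $I-H'(0,0,\cdot)$ at this point directly: linearizing \eqref{RSET} at $(L,\xi)=(0,0)$ gives the decoupled system $\delta\xi'=0$, $\delta L'=0$ with constraints $\int_0^1\delta L=0$, $\int_0^1\delta\xi=0$, whose only solution is $\delta L\equiv 0$, $\delta\xi\equiv 0$; hence the linear operator is injective, and being a compact perturbation of the identity it is invertible. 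Then part (ii) of Theorem \ref{SDTT} gives $deg(I-H(0,0,\cdot),\Omega,0)=(-1)^{\sigma}$ where $\sigma$ is the number of eigenvalues of $H'(0,0,\cdot)$ in $(1,\infty)$.

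The remaining point is to check that $\sigma$ is even — ideally $\sigma=0$. Because the linearized flow at $(0,0)$ is so degenerate (the right-hand side of \eqref{RSET} vanishes to second order there), the compact operator $H'(0,0,\cdot)$ is built from integrating against the Green's function of a trivial linear ODE, and I expect all its eigenvalues to be $0$, giving $\sigma=0$ and degree $1$ directly. If instead the explicit spectral computation produced an odd $\sigma$, I would absorb the sign by composing with the diffeomorphism $t\mapsto 1-t$, $(L,\xi)\mapsto(-L,-\xi)$ (which, as noted in Case 2 of Lemma \ref{APEDC}, preserves \eqref{RSET} while flipping the sign of the data) to reorient, or simply restate the lemma with $deg=\pm1$ — only nonvanishing of the degree is used downstream in Theorem \ref{SDTT}(i). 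The main obstacle I anticipate is the bookkeeping needed to verify that a single ball $\Omega$ works uniformly along the whole $p$-homotopy (this is where Lemma \ref{APEDC} is essential) and the careful computation of the Fréchet derivative of $H$ at $(0,0)$ and of its spectrum; the geometric/ODE content is comparatively routine given the monotonicity of $\xi$ and the sign-preservation of each $L_i$ already established.
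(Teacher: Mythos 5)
Your proposal follows essentially the same route as the paper: use the a priori bound of Lemma \ref{APEDC} to build a convex $\Omega$ containing all fixed points for every $p\in[0,1]$, apply homotopy invariance (Theorem \ref{SDTT}(iii)) in $p$ to reduce to $p=0$, observe that at $p=0$ the constraints $\int_0^1 L_i=\int_0^1\xi=0$ together with the monotonicity of $\xi$ and sign-preservation of each $L_i$ force the unique fixed point $(0,0)$, and note that the linearization of $H$ there is identically zero (since the right-hand side of \eqref{RSET} vanishes to second order at the origin), so $\sigma=0$ and Theorem \ref{SDTT}(ii) gives degree $1$. The digressions — claiming uniqueness for $p=1$ via a shooting map, and the fallback plan for an odd $\sigma$ — are not needed and are not in the paper; once you have computed that $H'(0,0,\cdot)=0$, its spectrum is $\{0\}$ and the degree is $1$ with no ambiguity of sign.
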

\begin{proof}
 Lemma \ref{APEDC} implies that solutions of $(y,\xi)-H(p,0,y,\xi)=0$ are bounded in the $C^1([0,1]; \mathbb{R}^n)\times C^0([0,1];\mathbb{R})$ sense independently of $p\in [0,1]$. Therefore, there exists a
 bounded open convex subset $\Omega\subset C^1([0,1]; \mathbb{R}^n)\times C^0([0,1];\mathbb{R})$ containing all solutions of $(y,\xi)-H(p,0,y,\xi)=0$. 
 Property (iii) of Theorem \ref{SDTT} implies that $deg(I-H(1,0,\cdot),\Omega,0)=deg(I-H(0,0,\cdot),\Omega,0)$. 
 
 Now solutions $(y,\xi)$ of $(y,\xi)-H(0,0,y,\xi)=0$ correspond exactly to those pairs $(L,\xi)$ solving \eqref{RSET} with the condition that $\int_{0}^{1}L_i=\int_{0}^{1}\xi=0$. 
 It is straightforward to demonstrate that $L_i=\xi=0$ is the unique 
 solution. Furthermore, the linearisation of $H$ at this solution is trivial, so $deg(I-H(0,0,\cdot),\Omega,0)=1$ by property (ii) of Theorem \ref{SDTT}. 
\end{proof}
Since $deg(I-H(1,0,\cdot),\Omega,0)=1\neq 0$, there exists a pair $(y,\xi)$ solving $(y,\xi)-H(1,0,y,\xi)=0$, which is a solution of \eqref{QEEWP} with $p=1$ and $h^2=0$. 
A consequence of Lemma \ref{SD1P1} is the following. 
\begin{lemma}\label{TBOQE}
 Fix any $\Omega$ satisfying the conclusion of Lemma \ref{SD1P1}. There exists a $K>0$ such that for all $h^2<K$, $deg(I-H(1,h^2,\cdot),\Omega,0)=1$.
\end{lemma}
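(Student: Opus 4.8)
The plan is to derive this from Lemma \ref{SD1P1} via the homotopy invariance of the Schauder degree, using $h^2$ itself as the homotopy parameter. By Lemma \ref{SD1P1} we have fixed a bounded open convex set $\Omega$ with $deg(I-H(1,0,\cdot),\Omega,0)=1$; in particular $\Omega$ contains every solution of $(y,\xi)-H(p,0,y,\xi)=0$ for $p\in[0,1]$, and the no-fixed-point condition holds on $\partial\Omega$ at $h^2=0$. The goal is to show the same holds for all sufficiently small $h^2>0$, so that property (iii) of Theorem \ref{SDTT} applies on the segment from $0$ to $h^2$ and forces $deg(I-H(1,h^2,\cdot),\Omega,0)=deg(I-H(1,0,\cdot),\Omega,0)=1$.

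The key step is therefore to rule out fixed points on $\partial\Omega$ for small $h^2$, i.e. to show: there is no sequence $h_k^2\to 0$ and $(y^{(k)},\xi^{(k)})\in\partial\Omega$ with $H(1,h_k^2,y^{(k)},\xi^{(k)})=(y^{(k)},\xi^{(k)})$. I would argue by contradiction. Suppose such a sequence exists. Since $H$ is completely continuous and $\{(y^{(k)},\xi^{(k)})\}\subset\bar\Omega$ is bounded, after passing to a subsequence $H(1,h_k^2,y^{(k)},\xi^{(k)})$ converges; hence $(y^{(k)},\xi^{(k)})$ converges in $C^1([0,1];\mathbb{R}^n)\times C^0([0,1];\mathbb{R})$ to some limit $(y^{(\infty)},\xi^{(\infty)})\in\partial\Omega$ — here one uses that $\partial\Omega$ is closed. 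Because $H$ is jointly continuous in its arguments (continuity in the $h^2$-slot is part of how $H$ was constructed as a Green's-function integral operator, cf. Section 4 of \cite{Buttsworth18}), passing to the limit gives $H(1,0,y^{(\infty)},\xi^{(\infty)})=(y^{(\infty)},\xi^{(\infty)})$. Thus $(y^{(\infty)},\xi^{(\infty)})$ is a fixed point of $H(1,0,\cdot)$ lying on $\partial\Omega$, contradicting the fact (from Lemma \ref{SD1P1} and its proof, via Lemma \ref{APEDC}) that all such fixed points lie in the open set $\Omega$.

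With the boundary condition secured, the homotopy $s\mapsto H(1,s h^2,\cdot)$, $s\in[0,1]$, is completely continuous and fixed-point-free on $\partial\Omega$ for each fixed small $h^2$, so Theorem \ref{SDTT}(iii) yields $deg(I-H(1,h^2,\cdot),\Omega,0)=deg(I-H(1,0,\cdot),\Omega,0)=1$. Taking $K>0$ to be the threshold produced by the contradiction argument above (any $h^2<K$ has no boundary fixed points) completes the proof. The main obstacle I anticipate is purely a matter of bookkeeping: making sure that the single set $\Omega$ fixed in Lemma \ref{SD1P1} is genuinely large enough that the limiting fixed point $(y^{(\infty)},\xi^{(\infty)})$ cannot sit on $\partial\Omega$ — this is exactly what the a priori bound of Lemma \ref{APEDC} provides, since that bound is independent of $p$ and gives strict interior containment, but one should be careful to invoke it for the limit equation (the $h^2=0$ equation) rather than the $h^2>0$ equations, for which no uniform-in-$h^2$ bound has yet been established at this point in the paper.
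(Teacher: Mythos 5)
Your proposal is correct and takes essentially the same approach as the paper: both use homotopy invariance (Theorem \ref{SDTT}(iii)) in the $h^2$ parameter, and both rule out boundary fixed points for small $h^2$ by the same compactness/contradiction argument (a sequence $h_k^2\to 0$ of boundary fixed points would, by complete continuity of $H$, converge along a subsequence to a boundary fixed point of $H(1,0,\cdot)$, contradicting Lemma \ref{SD1P1}). The paper simply packages this as defining $K$ to be the infimum of $h^2$ values admitting boundary fixed points (or $\infty$ if none exist) and showing $K>0$, which is the same content as your ``threshold'' remark.
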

\begin{proof}
Set $p=1$. If no solution of \eqref{QEECOS} subject to \eqref{QEEWP} exists on $\partial \Omega$ for any $h^2>0$, then property (iii) of Theorem \ref{SDTT} implies that 
$deg(I-H(1,h^2,\cdot),\Omega,0)=1$ for all $h^2>0$. We can then take $K=\infty$. 

If on the other hand there is a $h^2$ for which a solution of \eqref{QEECOS} subject to \eqref{QEEWP} exists on $\partial \Omega$, then let $K$ be the infimum of all such numbers. 

$\textbf{Case}$ $\textbf{1}$: $K=0$. In this case, we have a sequence $((h^2)^{(k)},y^{(k)},\xi^{(k)})$ solving $(y^{(k)},\xi^{(k)})=H(1,(h^2)^{(k)},y^{(k)},\xi^{(k)})$ with $(h^2)^{(k)}\to 0$. 
 Since $H$ is completely continuous, 
 we know there exists a subsequence convergent to a solution of $(y,\xi)-H(1,0,y,\xi)=0$ in $\partial \Omega$, which contradicts the definition of $\Omega$. 
 
$\textbf{Case}$ $\textbf{2}$: $K>0$. By the definition of $K$, $\partial \Omega$ has no solution of \eqref{QEECOS} subject to \eqref{QEEWP} for any $h^2<K$. 
 Property (iii) of Theorem \ref{SDTT} then implies that $deg(I-H(1,h^2,\cdot),\Omega,0)=1$, for all $h^2<K$. 
\end{proof}
Lemma \ref{TBOQE} implies that we have a pair $(y,\xi)$ solving $(y,\xi)-H(1,h^2,y,\xi)=0$ for $h^2<K$. 
This pair then solves \eqref{QEECOS} subject to the integral conditions \eqref{QEEWP}, and Theorem~\ref{DCT} follows. 

\section{Examples of Non-existence and Non-uniqueness for the Dirichlet problem}
In the last section, we proved Theorem \ref{DCT}, establishing that cohomogeneity one quasi-Einstein metrics 
exists subject to Dirichlet conditions for $g$ and $u$. We showed that we could prescribe the value of $h^2$ in the interval $(0,K)$ for some $K>0$. 
In this section, we set $h^2=1$, and examine this same problem for two specific examples of homogeneous spaces $G/H$. 
The first example exhibits non-existence, demonstrating that the $K$ of Theorem \ref{DCT} cannot be arbitrarily large. The second example exhibits non-uniqueness, 
implying that, although uniqueness is not treated in 
Theorem \ref{DCT}, we cannot expect uniqueness without $h^2$ small. 

\subsection{Non-existence}
The following proposition demonstrates that for certain values of $\lambda$, 
we cannot expect solutions of the cohomogeneity one quasi-Einstein Dirichlet problem to exist if $h$ is too large, even if the homogeneous space is an abelian Lie group. 
\begin{prop}
 Suppose $G=\mathbb{S}^1$, $H$ consists of only the identity, $h=1$ and $\lambda >\pi^2$. 
 Then there is no $G$-invariant quasi-Einstein metric with constant $\lambda$ on $G/H\times (0,1)$ if we require that $u(0)=u(1)$. 
\end{prop}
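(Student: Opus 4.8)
The plan is to specialise the ODE system \eqref{QEECO} to this example, exhibit a conserved quantity that — together with the condition $u(0)=u(1)$ — forces $u$ to be constant, and then observe that the residual scalar equation for the metric cannot be solved on an interval of length one when $\lambda>\pi^2$.

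First I would reduce to ODEs. Here $G/H=\mathbb{S}^1$ is one-dimensional and abelian, so in the notation of Section 2 we have $n=1$, $d_1=1$, and all the structure constants $\beta_1,\gamma^l_{1k}$ vanish, whence $r_1\equiv 0$ by \eqref{DHR}. Writing $y=y_1$ and using $h^2=1$, the system \eqref{QEECO} becomes
\begin{align*}
-(y''+y'^2)+u''-m(u')^2&=\lambda,\\
-y''-y'^2+y'u'&=\lambda.
\end{align*}
Subtracting the second equation from the first gives $u''-m(u')^2=y'u'$, while the second equation says $y''+y'^2=y'u'-\lambda$.

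Next I would introduce $v=e^{-mu}$ and $f=e^{y}$, both smooth and positive on $(0,1)$. Using $u''-m(u')^2=y'u'$ one computes $v''=-m(u''-m(u')^2)v=-my'u'v=y'v'=(f'/f)v'$, since $v'=-mu'v$ and $f'/f=y'$. Hence
$$\frac{d}{dt}\left(\frac{v'}{f}\right)=\frac{v''f-v'f'}{f^{2}}=0,$$
so $v'=\kappa f$ on $(0,1)$ for some constant $\kappa$. Now the hypothesis $u(0)=u(1)$ gives $v(0)=v(1)$, so $0=\int_0^1 v'=\kappa\int_0^1 f$; as $f>0$ (so $\int_0^1 f>0$, whether finite or infinite — in the latter case $\kappa\ne0$ would make $v$ unbounded, contradicting that $v(0)$ exists), this forces $\kappa=0$, i.e.\ $v$, and therefore $u$, is constant. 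With $u'=0$, the second ODE reduces to $y''+y'^2=-\lambda$, equivalently $f''+\lambda f=0$ on $(0,1)$ with $f>0$. By Sturm's comparison theorem — or simply because every nonzero solution is a translate of $t\mapsto\sin(\sqrt{\lambda}\,t)$, whose positivity set is an open interval of length $\pi/\sqrt{\lambda}$ — a positive solution can exist on an interval of length one only if $\pi/\sqrt{\lambda}\ge 1$, i.e.\ $\lambda\le\pi^{2}$. This contradicts $\lambda>\pi^{2}$ and finishes the proof.

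Every computation above is elementary, so there is no real "hard step"; the step that carries the argument is recognising that $v'/e^{y}$ is a first integral of the system, which is precisely what collapses the two-variable problem to the single scalar equation $f''+\lambda f=0$. The only point meriting a little care is the precise sense in which $(g,u)$ is assumed to extend to the boundary, but as indicated, mere existence of the endpoint values $u(0),u(1)$ (equivalently, boundedness of $v$ near the endpoints) is enough to run the argument.
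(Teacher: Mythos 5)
Your proposal is correct in substance and reaches the same conclusion, but the key middle step --- showing that $u(0)=u(1)$ forces $u$ to be constant --- is handled by a genuinely different mechanism than the paper's. Both you and the paper derive the same combination $u''-m(u')^2=y'u'$ of the two ODEs. The paper then argues pointwise: by Rolle's theorem $u'$ vanishes somewhere on $[0,1]$, and since $w=u'$ satisfies the autonomous first-order ODE $w'=y'w+mw^2$, uniqueness forces $w\equiv 0$. You instead exhibit a first integral: the substitution $v=e^{-mu}$ linearises the Riccati-type equation for $u'$ into $v''=y'v'$, whence $v'/e^{y}$ is conserved, and the periodic boundary condition on $u$ forces this constant to vanish. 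Your route is more structural and makes the conservation law explicit; the paper's route is shorter and more elementary. The final step is equivalent up to the change of variable $L=f'/f$: your $f''+\lambda f=0$ is the linearisation of the paper's Riccati equation $L'+L^2+\lambda=0$.

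One small gap to flag: your argument as written silently assumes $m>0$. The paper takes $m\geq 0$ throughout, and the proposition does not exclude $m=0$. When $m=0$ you have $v=e^{-mu}\equiv 1$, so $v'\equiv 0$ and the conclusion ``$\kappa=0$, hence $v$ constant, hence $u$ constant'' is vacuous --- it gives no information about $u$. The fix is immediate and stays within your framework: for $m=0$ the derived equation is just $u''=y'u'$, so $u'/e^{y}$ is itself conserved, and the same integration argument gives $u'\equiv 0$. Alternatively, the paper's Rolle-plus-uniqueness argument handles $m\geq 0$ uniformly. Worth stating explicitly to make the proof cover the full range of $m$.
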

\begin{proof}
If our homogeneous space is $\mathbb{S}^1$, then $n=d_1=d=1$ and $\beta_i=\gamma_{ik}^{l}=0$, and \eqref{QEECO} becomes 
\begin{align*}
L'+L^2-u''+mu'^2+\lambda&=0, \\
L'+L^2-u'L+\lambda&=0.
\end{align*} 
These two equations imply that 
\begin{align}\label{CTEMP}
u''=u'L+mu'^2.
\end{align} 
Since $u(0)=u(1)$, there must be a $t\in [0,1]$ with $u'(t)=0$, so \eqref{CTEMP} implies that $u'=0$ on $[0,1]$, and $L$ solves the equation
$L'+L^2+\lambda=0$ on $[0,1]$. No continuous solution exists if $\lambda>\pi^2$. 
\end{proof}

\subsection{Non-uniqueness}
The following theorem demonstrates that we can also achieve non-uniqueness of solutions to the cohomogeneity one Dirichlet problem for quasi-Einstein metrics. 
\begin{theorem}\label{NUDP}
 Suppose $G=SO(3)$, $H=SO(2)$, $\lambda=m=0$ and $h=1$. 
 Then for some choice of $\hat{g}$, there exists at least two 
 cohomogeneity one quasi-Einstein metrics on $G/H\times (0,1)$ satisfying the Dirichlet conditions $u(0)=0=u(1)$ and $g_0=g_1=\hat{g}$. 
\end{theorem}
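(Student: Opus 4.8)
The plan is to analyze the system \eqref{QEECO} for the specific space $SO(3)/SO(2) = \mathbb{S}^2$ with $\lambda = m = 0$ and $h = 1$, and exhibit a one-parameter family of closed orbits in the phase space that all emanate from, and return to, the same round metric, thereby producing two distinct solutions with identical Dirichlet data. Since $\mathbb{S}^2 = SO(3)/SO(2)$ is isotropy irreducible, we have $n = 1$, so $g_t = e^{2y(t)}Q$ on $\mathfrak{m}$ with $d_1 = d = 2$, and $r_1(y) = \tfrac{\beta_1}{2}e^{-2y}$ for some $\beta_1 > 0$ determined by the round structure (one can normalize $\beta_1$ by rescaling $Q$, though it does not matter). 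With $\lambda = m = 0$, the system \eqref{QEECOS} becomes $\xi' = -2(y')^2$, $y'' = -\xi y' + \tfrac{\beta_1}{2}e^{-2y}$, where $\xi = 2y' - u'$. First I would write everything in terms of $L = y'$ and the auxiliary quantities, and look for periodic-type behaviour: the key point is that $\xi' = -tr(L^2) \le 0$ always (when $m=0$), so $\xi$ is monotone decreasing, which at first glance seems to obstruct periodicity of the full system — but $u$ itself need not be periodic, only $y$ and the boundary values must match.

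The cleaner route, which I expect the author takes, is to exploit the relationship with Einstein metrics via Proposition \ref{PQETE}, but run in reverse: rather than that, I would directly use the fact that cohomogeneity-one Ricci-flat or Einstein metrics on $\mathbb{R}^3$ or on $S^2 \times \mathbb{R}$-type manifolds are well understood. Concretely, with $m = \lambda = 0$, the second equation of \eqref{QEECO} for $i=1$ reads $y'' + 2(y')^2 + \ldots$; combining the two equations of \eqref{QEECO} as in the non-existence proposition gives $u'' = u'L + m(u')^2 = u'y'$ (using $m=0$), i.e. $(u' e^{-y})' \cdot e^{y}$-type relation, so $u' = C e^{y}$ for a constant $C$. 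Substituting back, the equation for $y$ becomes a single second-order autonomous-after-substitution ODE: $y'' = -(2y' - Ce^y)y' + \tfrac{\beta_1}{2}e^{-2y}$. Now the strategy is: for the symmetric Dirichlet data $y(0) = y(1) = \log\rho$, $u(0) = u(1) = 0$, the constant $C$ is not prescribed — it is a free shooting parameter. I would shoot from $t = 1/2$ with $y'(1/2) = 0$ (the natural symmetric ansatz), solve for the trajectory, and show that for suitable $C$ (equivalently suitable "neck radius") the solution is symmetric about $t = 1/2$, returns to the same $y$-value at $t = 0$ and $t = 1$, and that the induced $u$ satisfies $u(1) - u(0) = \int_0^1 Ce^{y}\,dt$; imposing $u(0) = u(1)$ forces either $C = 0$ or the integral to vanish. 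The $C = 0$ branch gives the "obvious" solution (essentially an Einstein-type profile on $S^2 \times (0,1)$ with $u$ constant), and a second branch with $C \ne 0$ but $\int_0^1 e^{y} = 0$ is impossible since $e^y > 0$ — so instead the non-uniqueness must come from two distinct $C = 0$-compatible profiles, or from dropping the symmetry assumption.

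Reconsidering: with $C=0$ we get $u \equiv 0$ and $y'' = -2(y')^2 + \tfrac{\beta_1}{2}e^{-2y}$ with $\xi = 2y'$, which is exactly the cohomogeneity-one Ricci-flat equation on $\mathbb{S}^2 \times \mathbb{R}$ (up to normalization). The point is that this equation, viewed as a first-order system in $(y, L)$, has trajectories that can be followed, and the question "for which initial radii $\rho$ does the solution with $y'(1/2)=0$ satisfy $y(0) = y(1) = \log\rho$" — by the reflection symmetry $t \mapsto 1-t$ this is automatic, so \emph{every} such trajectory gives a solution to \emph{some} symmetric Dirichlet problem. Non-uniqueness would then follow if two different choices — say $y'(1/2) = 0$ at one radius, versus a non-symmetric trajectory, or the flat solution $y \equiv$ const versus a genuinely curved one — realize the \emph{same} boundary values $\hat g$. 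Thus the real content, and \textbf{the main obstacle}, is a careful choice of $\hat g$ (the common value of $g_0 = g_1$) together with $\lambda = m = 0$ for which one can explicitly write down or rigorously compare two solutions: I would look for the constant solution $y \equiv y_0$ requires $\tfrac{\beta_1}{2}e^{-2y_0} = 0$, impossible, so there is no constant solution; instead one compares the trajectory through a point with $L = 0$ against a trajectory through the same $y$-value with $L \ne 0$ but arranged (via the freedom in $C$, now letting $C \ne 0$ and \emph{not} imposing symmetry) so that $u$ rises and falls back. I expect the cleanest execution uses a scaling/conservation argument: the system with $m = \lambda = 0$ has a homothety $y \mapsto y + s$, $t \mapsto e^{-s}t$ (roughly), and one constructs one short solution and one "long" solution rescaled to the unit interval realizing the same endpoint metric, with the $u$-component handled by the $u' = Ce^y$ formula and a sign-change trick — but verifying that both genuinely have $u(0) = u(1)$ and matching $g$ is where the real work lies, and I would budget most of the effort there.
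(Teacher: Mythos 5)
The relation $u'=Ce^{y}$, which your proposal rests on, is incorrect for $G/H=\mathbb{S}^2$. The clean cancellation that yields $u''=u'L+m(u')^2$ in the non-existence proposition uses two features special to $\mathbb{S}^1$: there $d_1=1$ and $r_1=0$. For $\mathbb{S}^2$ we have $d_1=2$ and $r_1(y)=\tfrac{\beta_1}{2}e^{-2y}\neq 0$, and combining the two equations of \eqref{QEECO} with $m=\lambda=0$, $h=1$ instead gives
\begin{equation*}
u''=2r_1(y)-2(y')^2+2u'y',
\end{equation*}
which is not the linear first-order equation for $u'$ you wanted. Everything downstream of the claim $u'=Ce^y$ — the discussion of the free constant $C$, the integral condition $\int_0^1 Ce^y=0$, and the conclusion that the non-trivial branch must come from $C=0$ — therefore does not apply.

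Beyond that error, the proposal also stops short of the actual content of the result. You correctly identify the symmetric shooting ansatz (shoot from $t=\tfrac12$ with $y'(\tfrac12)=0$), but the reflection symmetry requires imposing $\xi(\tfrac12)=0$ as well; only with both does the transformation $(\xi,y)(t)\mapsto(-\xi,y)(1-t)$ fix the solution, forcing $y(0)=y(1)$ and $\int_0^1\xi=0$ (equivalently $u(0)=u(1)$). Note in particular that these symmetric solutions generically have $u$ non-constant — with $\xi(\tfrac12)=y'(\tfrac12)=0$ one cannot have $u'\equiv 0$, since that would force $(y')^2=\beta e^{-2y}$, contradicting the data at $t=\tfrac12$. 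The crux of the proof, which your proposal does not reach, is to show that the shooting map $k_1=y(\tfrac12)\mapsto y(1)$ is \emph{not monotone} on the set of $k_1$ for which the symmetric solution survives to $t=1$. The paper does this by a three-case argument (ruling out monotone decreasing; ruling out monotone increasing with $k^*=-\infty$ via the inequality $2y'^2\geq\xi^2$ and the growth of $e^{-2y}$; and ruling out monotone increasing with $k^*>-\infty$ by a Hartman-type a priori bound and a continuation argument). The scaling/homothety idea you gesture at in the final paragraph is not used and would not by itself produce two solutions with the same Dirichlet data on the fixed interval $[0,1]$.
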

\begin{remark}
Since we are assuming $m=0$, the quasi-Einstein metric we find is a Ricci soliton. 
\end{remark}
The homogeneous space $G/H$ is isotropy irreducible, so \eqref{QEECOS} becomes 
\begin{align}\label{RSES2V}
 \begin{split}
 \xi'&=-2(y')^2,\\
 y''&=\beta e^{-2y}-\xi y',
 \end{split}
\end{align}
where $\beta>0$ depends on our choice of reference metric on $G/H=\mathbb{S}^2$. By scaling this reference metric, we can take $\beta=1$. 

To prove Theorem \ref{NUDP}, it suffices to prove that there are multiple solutions of \eqref{RSES2V} satisfying 
\begin{align*}
 y(0)=y(1)&=\bar{y},\\
 \int_{0}^{1}\xi&=0,
\end{align*}
for some choice of $\bar{y}\in \mathbb{R}$.
To find solutions, we will use the following two lemmas.
\begin{lemma}\label{SOS2}
 Suppose that we have a solution of \eqref{RSES2V} on $[0,1]$ such that
  $\xi(\frac{1}{2})=y'(\frac{1}{2})=0$. 
 Then $y(0)=y(1)$ and $\int_{0}^{1}\xi=0$. 
\end{lemma}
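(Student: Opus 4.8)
The plan is to exploit the time-reversal symmetry of the system \eqref{RSES2V}. Observe that the transformation $t \mapsto 1-t$ sends $y(t)$ to $y(1-t)$, $y'(t)$ to $-y'(1-t)$, and $\xi(t)$ to $-\xi(1-t)$; since the right-hand side of the first equation of \eqref{RSES2V} is even in $y'$ and the right-hand side of the second is odd under the simultaneous sign change of $\xi$ and $y'$, the reversed functions $\tilde{y}(t) = y(1-t)$, $\tilde{\xi}(t) = -\xi(1-t)$ again solve \eqref{RSES2V}. The condition $\xi(\tfrac{1}{2}) = y'(\tfrac{1}{2}) = 0$ means that at $t = \tfrac{1}{2}$ the original solution and its time-reversal have identical initial data: $\tilde{y}(\tfrac12) = y(\tfrac12)$, $\tilde{y}'(\tfrac12) = -y'(\tfrac12) = 0 = y'(\tfrac12)$, and $\tilde{\xi}(\tfrac12) = -\xi(\tfrac12) = 0 = \xi(\tfrac12)$.

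First I would set up this symmetry precisely and verify the invariance claim by direct substitution into \eqref{RSES2V}. Next, since both $(y,\xi)$ and $(\tilde{y},\tilde{\xi})$ solve the same first-order ODE system (after writing $y'' $ as a first-order system in $(y, y', \xi)$) with the same value at $t = \tfrac12$, uniqueness of solutions to initial-value problems for ODEs forces $(y,\xi) \equiv (\tilde{y},\tilde{\xi})$ on $[0,1]$; that is, $y(t) = y(1-t)$ and $\xi(t) = -\xi(1-t)$ for all $t \in [0,1]$. Evaluating the first identity at $t = 0$ gives $y(0) = y(1)$ immediately.

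Finally, for the integral condition, the relation $\xi(t) = -\xi(1-t)$ shows $\xi$ is odd about $t = \tfrac12$, so $\int_0^1 \xi(t)\,dt = \int_0^{1/2}\xi(t)\,dt + \int_{1/2}^1 \xi(t)\,dt$, and substituting $s = 1-t$ in the second integral turns it into $\int_0^{1/2}\xi(1-s)\,ds = -\int_0^{1/2}\xi(s)\,ds$, which cancels the first integral. Hence $\int_0^1 \xi = 0$.

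I do not anticipate a serious obstacle here: the only points requiring care are checking that the time-reversal genuinely preserves the system \eqref{RSES2V} (the evenness/oddness bookkeeping) and confirming that the standard uniqueness theorem applies, which it does since the right-hand sides are smooth in $(y, y', \xi)$ wherever $y$ is finite. The mild subtlety is that the solution must be assumed to exist on all of $[0,1]$, which is given in the hypothesis, so no extension argument is needed.
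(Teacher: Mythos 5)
Your proposal is correct and follows essentially the same argument as the paper: define the time-reversed functions $\tilde{y}(t)=y(1-t)$, $\tilde{\xi}(t)=-\xi(1-t)$, verify they satisfy \eqref{RSES2V} with the same data at $t=\tfrac12$, invoke uniqueness for the initial-value problem to conclude $y(t)=y(1-t)$ and $\xi(t)=-\xi(1-t)$, and then read off $y(0)=y(1)$ and $\int_0^1\xi=0$. The only difference is that you spell out the evenness/oddness bookkeeping and the change of variables in the integral more explicitly than the paper does, but the substance is identical.
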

\begin{proof}
 Consider the functions $\tilde{y}(t)=y(1-t)$ and $\tilde{\xi}(t)=-\xi(1-t)$. We have 
 $\tilde{\xi}(\frac{1}{2})=\tilde{y}'(\frac{1}{2})=0$, $\tilde{y}(\frac{1}{2})=y(\frac{1}{2})$, and the pair $(\tilde{\xi},\tilde{y})$ also satisfies 
 \eqref{RSES2V}. Therefore, $(\tilde{\xi},\tilde{y})$ must be identical to $(\xi,y)$ on $[0,1]$. In particular, $y(0)=\tilde{y}(0)=y(1)$, and 
 $\xi(t)=-\xi(1-t)$, so $\int_{0}^{1}\xi=0$. 
\end{proof}
\begin{lemma}\label{SEFKL}
There exists $k\in \mathbb{R}$ such that a solution of \eqref{RSES2V} subject to the conditions
\begin{align}\label{DCES}
 \begin{split}
  \xi(\tfrac{1}{2})=0,\qquad
  y'(\tfrac{1}{2})=0,\qquad
  y(\tfrac{1}{2})=k_1,
 \end{split}
\end{align} 
exists on $[0,1]$ whenever $k_1>k$.
\end{lemma}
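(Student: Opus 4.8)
The plan is to extract a conserved quantity from \eqref{RSES2V}, use it to convert the second equation into a Riccati-type differential inequality for $y'$, compare with an explicit solvable ODE to rule out blow-up on $[\tfrac12,1]$, and then transport the solution to $[0,\tfrac12]$ by the same reflection symmetry used in the proof of Lemma \ref{SOS2}. The threshold $k$ will come out of the comparison ODE.

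First I would multiply the second equation of \eqref{RSES2V} by $y'$ and substitute $(y')^2=-\tfrac12\xi'$ (from the first equation) and $e^{-2y}y'=-\tfrac12\tfrac{d}{dt}e^{-2y}$, obtaining that
\[
E:=\tfrac12(y')^2+\tfrac12 e^{-2y}-\tfrac14\xi^2
\]
is constant along any solution; evaluating at $t=\tfrac12$ with the data \eqref{DCES} gives $E=\tfrac12 e^{-2k_1}$. Next I would check that on the maximal forward interval of existence $[\tfrac12,\omega)$ the function $y$ is non-decreasing: at $t=\tfrac12$ one has $y'=0$ and $y''=e^{-2y}>0$, so $y'>0$ just to the right of $\tfrac12$, and at any first subsequent zero $t_0>\tfrac12$ of $y'$ one still has $y''(t_0)=e^{-2y(t_0)}-\xi(t_0)\cdot 0>0$, forcing a strict local minimum of $y'$ there, which contradicts $y'>0$ on $(\tfrac12,t_0)$. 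Hence $y\ge k_1$, so $e^{-2y}\le e^{-2k_1}$, and the conservation law gives $\tfrac14\xi^2=\tfrac12(y')^2+\tfrac12 e^{-2y}-\tfrac12 e^{-2k_1}\le\tfrac12(y')^2$, i.e. $|\xi|\le\sqrt2\,y'$ on $[\tfrac12,\omega)$. Feeding this back into $y''=e^{-2y}-\xi y'$, and using $\xi\le 0$, $y'\ge 0$ there, yields $y''\le e^{-2k_1}+\sqrt2\,(y')^2$.

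I would then compare $y'$ with the solution $w$ of $w'=e^{-2k_1}+\sqrt2\,w^2$, $w(\tfrac12)=0$, namely $w(t)=\sqrt{e^{-2k_1}/\sqrt2}\,\tan\!\big(\sqrt{\sqrt2\,e^{-2k_1}}\,(t-\tfrac12)\big)$. This $w$ is finite on all of $[\tfrac12,1]$ precisely when $\sqrt2\,e^{-2k_1}<\pi^2$, i.e. $k_1>k:=-\tfrac12\ln(\pi^2/\sqrt2)$. For such $k_1$, the standard ODE comparison principle gives $0\le y'(t)\le w(t)\le w(1)<\infty$ on $[\tfrac12,\omega)\cap[\tfrac12,1]$, so $y$, $y'$, and hence $\xi$ (via $|\xi|\le\sqrt2\,y'$) are all bounded there; by the usual continuation theorem for ODEs the solution extends past $t=1$, so it exists on $[\tfrac12,1]$. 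Finally, exactly as in the proof of Lemma \ref{SOS2}, the pair $\big(y(1-t),-\xi(1-t)\big)$ solves \eqref{RSES2V} with the same data at $t=\tfrac12$, so by uniqueness it coincides with $(y,\xi)$; this reflection carries the solution from $[\tfrac12,1]$ onto $[0,\tfrac12]$, giving a solution on all of $[0,1]$ whenever $k_1>k$.

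I expect the only non-routine step to be recognizing the conserved energy $E$; once that is in hand, the rest is a standard Riccati/continuation/reflection argument. The one point requiring a little care is organizing the estimates on the maximal interval of existence, so that the a priori bounds themselves yield existence on $[\tfrac12,1]$ rather than being assumed on it.
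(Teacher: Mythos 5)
Your proof is correct, but it takes a genuinely different route from the paper's. The paper converts \eqref{RSES2V} with \eqref{DCES} into a fixed-point problem for a map $H$ on the metric ball $\{\|\xi\|,\|z\|\le R,\ \|y-k_1\|\le 1\}$ in $C^0([\tfrac12,1];\mathbb{R}^3)$ and observes that for $k_1$ large (so $e^{-2y}\le e^{-2(k_1-1)}$ is small) and $R$ small, $H$ is a contraction; Banach's fixed point theorem then gives a solution on $[\tfrac12,1]$, which is reflected to $[0,1]$. Your argument instead finds the first integral $E=\tfrac12(y')^2+\tfrac12 e^{-2y}-\tfrac14\xi^2$ (one checks $E'=y'(e^{-2y}-\xi y')-y'e^{-2y}+\xi (y')^2=0$), uses the forward-in-time monotonicity $y'\ge 0$ (which follows since a first zero $t_0>\tfrac12$ of $y'$ would have $y''(t_0)=e^{-2y(t_0)}>0$, impossible after $y'>0$), deduces the a priori bound $|\xi|\le\sqrt{2}\,y'$, and then compares $y''\le e^{-2k_1}+\sqrt{2}(y')^2$ against a Riccati equation to keep $y'$ bounded on $[\tfrac12,1]$. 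The practical trade-off is that the paper's contraction argument is short and "soft" (no explicit threshold), whereas your argument is "hard": it produces the explicit threshold $k=-\tfrac12\ln(\pi^2/\sqrt2)$ and explicit $C^0$ bounds on $(y,y',\xi)$. The one point worth making fully explicit is that the comparison $y'\le w$ on $[\tfrac12,\omega)\cap[\tfrac12,1]$ requires the standard ODE comparison lemma with locally Lipschitz right-hand side, which holds for $a+bw^2$; once $y$, $y'$, and hence $\xi$ are bounded on the maximal interval, continuation of solutions gives existence on all of $[\tfrac12,1]$, and the reflection $(y(1-t),-\xi(1-t))$ (exactly as in Lemma \ref{SOS2}) completes the extension to $[0,1]$.
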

\begin{proof}
Consider the complete metric space 
$$X=\{(\xi,y,z)\in C^0([\tfrac{1}{2},1];\mathbb{R}^3): \left|\left|\xi \right|\right|_{C^0[\frac{1}{2},1]}\le R,
\left|\left|y-k_1 \right|\right|_{C^0[\frac{1}{2},1]}\le 1, \left|\left|z \right|\right|_{C^0[\frac{1}{2},1]}\le R\}$$
for some $R>0$. The problem of solving \eqref{RSES2V} subject to \eqref{DCES} can alternately be formulated as finding a fixed point of $H:C^0([\frac{1}{2},1];\mathbb{R}^3)\to C^0([\frac{1}{2},1];\mathbb{R}^3)$, where the first, second and third components of $H(\xi,y,z)$ are given by 
\begin{align*}
&\int_{\frac{1}{2}}^{t}-2z(s)^2\,ds,\\
&k_1+\int_{\frac{1}{2}}^{t}z(s)\,ds,\\
&\int_{\frac{1}{2}}^{t}e^{-2y(s)}-\xi(s)z(s)\,ds, 
\end{align*}
respectively. For large $k_1$ and small $R$, $H$ is a contraction on $X$. 
The result then follows from the Banach fixed point Theorem.
\end{proof}
By Lemma \ref{SOS2}, we know that to find a solution of \eqref{RSES2V} with 
$\int_{0}^{1}\xi=0$, and $y(0)=y(1)=\bar{y}$, it suffices to find a solution of \eqref{RSES2V} with \eqref{DCES}, where $k_1$ is chosen so that $y(1)=\bar{y}$. 
The following lemma demonstrates that a choice of $k_1$ is not always unique, completing the proof of Theorem \ref{NUDP}.  
\begin{lemma}
 There exists values of $\bar{y}$ such that there are at least two values of $k_1$ for which a solution of \eqref{RSES2V} and \eqref{DCES} 
  satisfies $y(1)=\bar{y}$. 
\end{lemma}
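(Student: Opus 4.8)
The plan is to track how the endpoint value $y(1)$ depends on the parameter $k_1$, and show this dependence is non-monotone, so that some target value $\bar y$ is attained for (at least) two distinct $k_1$. Write $y_{k_1}$ for the solution of \eqref{RSES2V} and \eqref{DCES}; by Lemma \ref{SEFKL} this is defined on $[0,1]$ for all $k_1>k$, and by Lemma \ref{SOS2} the corresponding metric meets the Dirichlet condition with $\bar y=y_{k_1}(0)=y_{k_1}(1)$. So it suffices to produce $\bar y$ in the range of $k_1\mapsto y_{k_1}(1)$ in two distinct ways.

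First I would analyze the large-$k_1$ regime. When $k_1$ is large, $e^{-2y}$ is uniformly tiny near $t=\tfrac12$, so to leading order $\xi\equiv 0$ and $y\equiv k_1$ on $[\tfrac12,1]$; more precisely, a Gronwall/contraction estimate (the one implicit in Lemma \ref{SEFKL}) gives $|y_{k_1}(1)-k_1|\to 0$ and in fact $y_{k_1}(1)-k_1=O(e^{-2k_1})$ as $k_1\to\infty$. Hence $y_{k_1}(1)\to\infty$ and, for $k_1$ large, $y_{k_1}(1)$ is an increasing function of $k_1$ with $y_{k_1}(1)\approx k_1$.

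Next I would show that near the lower threshold the behaviour is different: as $k_1$ decreases toward some critical value, the solution is forced to develop $y'(1)$ of definite sign with $|y'|$ not going to zero, which pushes $y_{k_1}(1)$ either above $k_1$ or makes the map $k_1\mapsto y_{k_1}(1)$ fail to be injective. Concretely: for $t>\tfrac12$ one has $\xi'=-2(y')^2\le0$ and $y''=e^{-2y}-\xi y'$; starting from $\xi(\tfrac12)=y'(\tfrac12)=0$, the term $e^{-2y}>0$ makes $y'$ become positive immediately, so $y$ increases and $\xi$ decreases, i.e. $\xi(1)<0<y'(1)$ for every $k_1$. For small $k_1$ the source $e^{-2y}$ is large, so $y$ increases substantially over $[\tfrac12,1]$, and one can bound $y_{k_1}(1)$ below by a quantity that \emph{does not} tend to $-\infty$ as $k_1$ decreases to its threshold $k$ — in fact I expect $\liminf_{k_1\downarrow k} y_{k_1}(1)$ to be strictly larger than the value $\lim$ of $y_{k_1}(1)$ one would naively extrapolate, giving the non-monotonicity. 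By continuous dependence on $k_1$ (standard ODE theory), $k_1\mapsto y_{k_1}(1)$ is continuous on its domain; combined with the asymptotics $y_{k_1}(1)\sim k_1$ for large $k_1$ and the lower bound for small $k_1$, the continuous function $k_1\mapsto y_{k_1}(1)$ is not monotone, so by the intermediate value theorem some value $\bar y$ is hit at two distinct points $k_1$.

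The main obstacle will be making the small-$k_1$ estimate quantitative enough to rule out monotonicity: one must show that the increase $y_{k_1}(1)-k_1$ is bounded below by something that stays bounded away from $0$ (indeed grows) as $k_1\downarrow k$, while simultaneously controlling that the solution does not blow up before $t=1$ (this is exactly what fixes the threshold $k$). A clean way to do this is to compare with the autonomous system obtained by freezing $e^{-2y}$ at $e^{-2k_1}$ over a short sub-interval and then iterating, or to exploit the conserved-type quantity coming from $\xi'=-2(y')^2$: integrating gives $\xi(t)=-2\int_{1/2}^t (y')^2$, so $y''=e^{-2y}+2y'\int_{1/2}^t(y')^2$, and on $[\tfrac12,1]$ both terms on the right are nonnegative once $y'>0$, forcing $y'$ to be increasing and hence $y(1)\ge k_1+\tfrac12\,e^{-2(k_1+1)}\cdot\tfrac14$ or similar; the delicate point is that as $k_1$ decreases this lower bound is being compared against the finite-time-existence constraint, and getting the two to interact so as to produce two preimages of a single $\bar y$ is where the real work lies.
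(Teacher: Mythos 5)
Your overall strategy --- establish continuity and non-monotonicity of $k_1\mapsto y_{k_1}(1)$ on $(k^*,\infty)$ and then invoke the intermediate value theorem --- is exactly the paper's, and your large-$k_1$ observation (by convexity $y(1)\ge k_1\to\infty$, so the map cannot be monotone decreasing) coincides with the paper's Case~1. The gap is on the small-$k_1$ side, which you acknowledge yourself as ``where the real work lies.'' As written, the sketch does not close the argument: a lower bound on $y_{k_1}(1)-k_1$ that grows as $k_1$ decreases only caps the slope of $k_1\mapsto y_{k_1}(1)$ below $1$, and is perfectly consistent with the map being monotone increasing. To rule out monotone increase one must show that $y_{k_1}(1)$ actually increases (or tends to $+\infty$) as $k_1$ decreases toward the threshold, and no such estimate is established; the remark about $\liminf_{k_1\downarrow k}y_{k_1}(1)$ exceeding a ``naively extrapolated'' value is not precise enough to be a proof.

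You also implicitly take the lower threshold $k^*$ to be finite, but the two regimes $k^*=-\infty$ and $k^*>-\infty$ require genuinely different treatments, and the paper handles them as separate cases within the ``ruling out monotone increasing'' step. If $k^*=-\infty$, the paper computes $(2(y')^2-\xi^2)'=4y'e^{-2y}\ge 0$, concludes $\xi^2\le 2(y')^2$ and hence $\xi'\le-\xi^2$; this differential inequality bounds $\xi(\frac78)$ from below independently of $k_1$, which in turn bounds $y'(\frac34)$ and hence $y-k_1$ on $[\frac12,\frac34]$; but then $e^{-2y}$ blows up as $k_1\to-\infty$, which by convexity forces $y'(\frac34)\to\infty$, a contradiction. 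If $k^*>-\infty$, the paper instead derives an a priori $C^1$ bound via Hartman's criterion and Arzela--Ascoli to show the solution persists at $k_1=k^*$, and then a perturbation argument yields solutions for some $k_1<k^*$, contradicting the minimality of $k^*$. Neither of these case arguments appears in your proposal, and both are needed to complete the proof.
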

\begin{proof}
Let $k^*$ be the infimum of all values of $k$ such that a solution of \eqref{RSES2V} with 
\eqref{DCES} exists on $[0,1]$ whenever $k_1>k$. Such a $k^*$ exists because of Lemma 
\ref{SEFKL}. Then 
for all $k_1\in (k^*,\infty)$, there exists a solution of \eqref{RSES2V} with 
\eqref{DCES} on $[0,1]$. Since the value of $y(1)$ depends continuously on $k_1$, the proof will be complete if we can demonstrate that $y(1)$ does not 
depend monotonically on $k_1\in (k^*,\infty)$. We do this by ruling out certain cases.

$\textbf{Case}$ $\textbf{1}$: $y(1)$ is decreasing as $k_1$ increases on $(k^*,\infty)$. 
However, by taking $k_1\to \infty$, we see that $y(1)\to \infty$ as well because $y(1)\ge k_1$, which contradicts the assertion that $y(1)$ is monotone decreasing. 

$\textbf{Case}$ $\textbf{2}$: $y(1)$ is increasing as $k_1$ increases and $k^*=-\infty$. Now $(2y'^2-\xi^2)'=4y'(e^{-2y}-\xi y')+4\xi(y')^2=4y'e^{-2y}$, so 
from the fact that $2y'^2-\xi^2=0$ at $t=\frac{1}{2}$ and $y''\ge 0$ on $[0,1]$, we conclude that 
\begin{align}\label{IIS}
2 y'^2(t)\ge \xi(t)^2
\end{align}
for all $t\in [0,1]$. We 
then find that \begin{align}\label{EFVI}
                              \xi'\le -(\xi)^2.
                             \end{align}
 This implies that as $k_1\to -\infty$, $\xi\left(\frac{7}{8}\right)$ remains bounded from below, otherwise \eqref{EFVI} implies that $\xi$ blows up before $t$ gets to $1$. 
 Now using the fact that $y''\ge 0$ on $[0,1]$, we see that 
 \begin{align*}
  \xi(\tfrac{7}{8})&=-\int_{\frac{1}{2}}^{\frac{7}{8}}2(y'(s)^2)ds\\
  &\le -\int_{\frac{3}{4}}^{\frac{7}{8}}2(y'(s)^2)ds\\
  &\le -\frac{1}{4}y'(\tfrac{3}{4})^2,
 \end{align*}
so $y'(\frac{3}{4})^2$ is also bounded as $k_1\to -\infty$. Again using the inequality $y''(t)\ge 0$, 
we find that $y'$ is bounded on $[\frac{1}{2},\frac{3}{4}]$. This implies that 
$y-k_1$ is bounded on $[\frac{1}{2},\frac{3}{4}]$, from which we find that as $k_1\to -\infty$, $e^{-2y}$ is getting arbitrarily large on $[\frac{1}{2},\frac{3}{4}]$, whence the second equation of \eqref{RSES2V} 
implies that $y'(\frac{3}{4})$ is getting arbitrarily large, a contradiction. 

$\textbf{Case}$ $\textbf{3}$: $y(1)$ is increasing as $k_1$ increases and $k^*>-\infty$. In this case, we claim that a solution of \eqref{RSES2V} subject to \eqref{DCES} 
exists for $k_1=k^*$. To see this, take a sequence of $k_1>k^*$ such that $k_1\to k^*$. By our assumption on $y(1)$ 
and the monotonicity of $y$, we know that $\left| y(t)\right|$
is bounded 
on $[0,1]$, say by $R>0$. Therefore, \eqref{IIS} implies that $0\le y''\le e^{2R}+\sqrt{2}(y')^2=\phi(\left|y'\right|)$, where 
$\phi:[0,\infty)$ satisfies $\int_{0}^{\infty}\frac{s}{\phi(s)}ds=\infty$. 
Lemma 5.1 in Chapter 12 of \cite{Hartman} then implies that there exists an $M>0$ such that 
$\left|y'\right|<M$ on $[0,1]$. These estimates on $y$ and $y'$ alongside \eqref{RSES2V} imply 
that $y$ is bounded in $C^3[0,1]$ and $\xi$ is bounded in $C^2[0,1]$. 
The Arzela-Ascoli Theorem then implies that we have a convergent subsequence of 
$(y,\xi)$ in $C^2[0,1]\times C^1[0,1]$. 
The limit is clearly going to be a solution of \eqref{RSES2V} subject to \eqref{DCES} with $k_1=k^*$. 

Since a solution of \eqref{RSES2V} with 
\eqref{DCES} exists for $k_1=k^*$, we can use basic perturbation arguments to prove existence of solutions to \eqref{RSES2V} and \eqref{DCES} for some values of $k_1<k^*$. This contradicts the definition of $k^*$. 
\end{proof}

\section{Acknowledgements}
I am grateful to my advisory team Artem Pulemotov, J{\o}rgen Rasmussen and Joseph Grotowski for their supervision of the project. 
I would also like to thank Andrew Dancer for suggesting the problem to me. Finally, I am very grateful to Matthias Wink for numerous discussions on this project as well as 
many other topics in mathematics. 
This research was supported by the Australian Government through a Research Training Program scholarship as well as through Artem Pulemotov’s Discovery Early-Career Researcher Award DE150101548.


\begin{thebibliography}{25}
 \bibitem{Anderson08}
 M. Anderson. On boundary value problems for Einstein metrics. Geom. Topol.,
12:2009–2045, 2008.
\bibitem{ArvFM}
A. Arvanitogeorgos. An introduction to Lie groups and the geometry of homogeneous
spaces. American Mathematical Society, RI, 2003.
\bibitem{Besse}
A. Besse. Einstein Manifolds. Springer-Verlag, Berlin, 1987.
\bibitem{Brendle2a}
 S. Brendle. Curvature 
flows on surfaces with boundary. Math. Ann., 324(3):491-519,
2002.
\bibitem{Brendle2b}
S. Brendle. A generalization of the Yamabe flow for manifolds with boundary. Asian
J. Math., 6(4):625-644, 2002.
\bibitem{Buttsworth18}
 T. Buttsworth. The Dirichlet problem for Einstein metrics on cohomogeneity one
manifolds. Ann. Glob. Anal. Geom., 54(1):155–171,
2018.
\bibitem{Buzano11}
M. Buzano. Initial value problem for cohomogeneity one gradient Ricci solitons. J.
Geom. Phys., 61(6):1033-1044, 2011.
\bibitem{Case}
J. Case. The nonexistence of quasi-Einstein metrics. Pacific J. Math., 248(2):277-284,
2010.
\bibitem{Wei}
J. Case, Y. Shu, and G. Wei. Rigidity of quasi-Einstein metrics. Differential Geom.
Appl, 29:93-100, 2011.
\bibitem{Cort}
J. Cortissoz. Three-manifolds of positive curvature and convex weakly umbilic boundary.
Geom. Dedicata, 138:83-98, 2009.
\bibitem{CA12A}
J. Cortissoz and A. Murcia. The Ricci flow on surfaces with boundary. arXiv preprint,
arXiv:1209.2386, 2012.
\bibitem{DancerWang}
A. Dancer and M. Wang. The cohomogeneity one Einstein equations from the Hamiltonian
viewpoint. J. Reine Angew. Math., 524:97-128, 2000.
\bibitem{DancerWangS}
 A. Dancer and M. Wang. On Ricci solitons of cohomogeneity one. Ann. Global Anal.
Geom., 39(3):259-292, 2011.
\bibitem{EschenburgWang}
J. Eschenburg and M.Wang. The initial value problem for cohomogeneity one Einstein
metrics. J. Geom. Anal., 10(1):109-137, 2000.
\bibitem{Fulton}
W. Fulton. Introduction to Toric Varieties. Princeton University Press, Princeton,
1993.
\bibitem{G16}
P. Gianniotis. Boundary estimates for the Ricci flow. Calc. Var. Partial Differential
Equations, 55(1):1-21, 2016.
\bibitem{PG}
P. Gianniotis. The Ricci flow on manifolds with boundary. J. Differential Geom.,
104(2):291-324, 2016.
\bibitem{APMG}
M. Gould and A. Pulemotov. The prescribed Ricci curvature problem on homogeneous
spaces with intermediate subgroups. arXiv preprint, arXiv:1710.03024, 2017.
\bibitem{Graev}
M. Graev. On the compactness of the set of invariant Einstein metrics. Ann. Glob.
Anal. Geom., 44(4):471-500, 2013.
\bibitem{GroveZiller}
K. Grove and W. Ziller. Cohomogeneity one manifolds with positive Ricci curvature.
Invent. Math., 149(3):619-646, 2002.
\bibitem{HallQE}
S. Hall. Quasi-Einstein metrics on hypersurface families. J. Geom. Phys., 64:83-90,
2013.
\bibitem{Hartman}
P. Hartman. Ordinary Differential Equations. Wiley, New York, 1974.
\bibitem{KimKim}
 D. Kim and Y. Kim. Compact Einstein warped product spaces with nonpositive
scalar curvature. Proc. Amer. Math. Soc., 131(8):2573-2576, 2003.
\bibitem{APQLRF}
A. Pulemotov. Quasilinear parabolic equations and the Ricci flow on manifolds with
boundary. J. Reine Angew. Math., 683:97-118, 2013.
\bibitem{APC}
A. Pulemotov. The Dirichlet problem for the prescribed Ricci curvature equation on
cohomogeneity one manifolds. Ann. Math. Pura Appl., 195(4):1269-1286, 2016.
\bibitem{PulemotovRF}
A. Pulemotov. The Ricci flow on domains in cohomogeneity one manifolds. J. Math.
Anal. Appl., 456(2):745-766, 2017.
\bibitem{ASDT}
E. Rothe. Introduction to Various Aspects of Degree Theory in Banach Spaces. American
Mathematical Society, Providence, 2014.
\bibitem{Shen}
Y. Shen. On Ricci deformation of a Riemannian metric on manifold with boundary.
Pacific J. Math., 173(1):203-221, 1996.
\bibitem{Wang}
M. Wang and W. Ziller. Existence and nonexistence of homogeneous Einstein metrics.
Invent. Math., 84(1):177-194, 1986.
\bibitem{WeiWyley}
G. Wei and W. Wylie. Comparison geometry for the smooth metric measure spaces.
ICCM, 2(5):191-202, 2007.
\bibitem{Wink}
M. Wink. Cohomogeneity one Ricci solitons from Hopf Fibrations. arXiv preprint,
arXiv:1706.09712, 2017.
\end{thebibliography}
\end{document}